\crefname{algorithm}{Algorithm}{Algorithms}
\Crefname{algorithm}{Algorithm}{Algorithms}
\newtheorem{theorem}{Theorem}[section]
\newenvironment{proof}{{\noindent\it Proof.}\quad}{\hfill $\square$\par}
\DeclareMathOperator{\diag}{diag}
\title{sparseGeoHOPCA: A Geometric Solution to Sparse Higher-Order PCA Without Covariance Estimation}
\author{
    Renjie~Xu\thanks{These authors contributed equally to this work.}$^{,}$\thanks{Corresponding author.}\\
    Department of Electrical Engineering and Centre for Intelligent Multidimensional Data Analysis\\
    City University of Hong Kong\\
     Kowloon, Hong Kong SAR, P. R. of China\\
    \texttt{renjie@innocimda.com}\\
    \And
    Chong~Wu\footnotemark[1]\\
    Department of Electrical Engineering and Centre for Intelligent Multidimensional Data Analysis\\
    City University of Hong Kong\\
    Kowloon, Hong Kong SAR, P. R. of China\\
    \texttt{chong@innocimda.com} \\
    \And 
     Maolin~Che \\
    State Key Laboratory of Public Big Data and School of Mathematics and Statistics\\
    Guizhou University\\
    Guiyang, Guizhou, P. R. of China\\
    \texttt{chncml@outlook.com} \\  
    \And
  Zhuoheng~Ran\\
  Department of Electrical Engineering and Centre for Intelligent Multidimensional Data Analysis\\
  City University of Hong Kong\\
  Kowloon, Hong Kong SAR, P. R. of China\\
  \texttt{harry.ran@my.cityu.edu.hk} \\
    \And
  Yimin~Wei\\
  School of Mathematics\\
  Fudan University\\
  Yangpu, Shanghai, P. R. of China\\
  \texttt{ymwei@fudan.edu.cn} \\  
    \And
  Hong~Yan\\
  Department of Electrical Engineering and Centre for Intelligent Multidimensional Data Analysis\\
  City University of Hong Kong\\
  Kowloon, Hong Kong SAR, P. R. of China\\
  \texttt{h.yan@cityu.edu.hk} \\  
}
\begin{document}

\maketitle

\begin{abstract}
  We propose sparseGeoHOPCA, a novel framework for sparse higher-order principal component analysis (SHOPCA) that introduces a geometric perspective to high-dimensional tensor decomposition. 
  By unfolding the input tensor along each mode and reformulating the resulting subproblems as structured binary linear optimization problems, our method transforms the original nonconvex sparse objective into a tractable geometric form.
  This eliminates the need for explicit covariance estimation and iterative deflation, enabling significant gains in both computational efficiency and interpretability, particularly in high-dimensional and unbalanced data scenarios. We theoretically establish the equivalence between the geometric subproblems and the original SHOPCA formulation, and derive worst-case approximation error bounds based on classical PCA residuals, providing data-dependent performance guarantees. The proposed algorithm achieves a total computational complexity of  $O\left(\sum_{n=1}^{N} (k_n^3 + J_n k_n^2)\right)$, which scales linearly with tensor size. Extensive experiments demonstrate that sparseGeoHOPCA accurately recovers sparse supports in synthetic settings, preserves classification performance under 10× compression, and achieves high-quality image reconstruction on ImageNet, highlighting its robustness and versatility.
\end{abstract}
%1）更容易找到全局最优；
%2）计算效率更高；
%3）因为不再需要计算存储整个协方差矩阵，所以对存储资源需求更少；
%4）GeoSPCA能够一次性构建所有主成分，而不是通过迭代的方式逐步添加，这有助于避免因迭代过程中的数据秩减而导致的信息损失。

% 引言动机明确，问题清晰
%  方法有创新点，表达严谨
%  实验对比充分，指标全面
%  图文并茂，图注清楚
%  表达精炼，术语规范
%  格式规范，引用准确
%  无抄袭，代码数据可复现
%  有结论、有未来展望
\section{INTRODUCTION}\label{sec: introduction} 
% 本文中，我们通过一种几何观点高效地近似高维稀疏主成分分析稀疏。该框架能够以即插即用的方式使用现成的算法。具体而言，该框架包含三个主要步骤：1，模展开与初始化支持集选择：对于一个SHOPCA问题，我们通过模展开，将高维问题转换为不同模态下的矩阵稀疏PCA子问题，并为模展开的矩阵问题初始化支持集选择； 2，子问题的几何视角求解：在模展开后的矩阵PCA问题采用几何视角，利用带几何约束的二元线形优化问题求解模展开后矩阵的范数最大的列；3，核张量计算与整体输出：通过不同模展开下的子问题的解获得支持集，计算输出核张量和因子矩阵。我们在图1中展示了这些步骤。对比过去的方案，由于模展开后的矩阵通常是列远大于行的矩阵，传统的稀疏问题求解涉及的协方差矩阵在高维问题中会严重占据计算存储资源，牺牲计算效率。我们的框架显著加快了稀疏高维主成分分析的计算速度，且更容易找到全局最优，该近似算法的误差可以被有效控制。

In this paper, we study the sparse higher-order principal component analysis (SHOPCA) problem.
Higher-order principal component analysis (HOPCA), or multilinear principal component analysis (MPCA), refers to the extension of classical principal component analysis (PCA) to tensor-structured data, enabling dimensionality reduction and pattern extraction from higher-order data structures~\cite{kolda2009tensor,lu2008mpca}. 

To address the limitations of traditional HOPCA in high-dimensional settings, the introduction of sparsity constraints has emerged as an effective strategy. 
The motivation for incorporating sparsity is fourfold: (1) Sparsity enhances interpretability by ensuring that each principal component involves only a small subset of relevant features, making the results more understandable and visually interpretable.
(2) Sparse representations promote automatic feature selection and improve compression efficiency by focusing on the most informative variables. 
(3) In high-dimensional, low-sample-size scenarios, sparsity mitigates the statistical instability of classical PCA solutions, improving estimation accuracy~\cite{johnstone2009consistency}. 
(4) SHOPCA preserves the structural integrity of the original tensor while emphasizing the most informative subset of features, making it suitable for complex real-world applications such as multimodal learning~\cite{sun2022tensorformer}, biomedical analysis~\cite{allen2012sparse}, and recommender systems~\cite{frolov2017tensor}.

However, introducing sparsity into tensor PCA results in a non-convex combinatorial optimization problem, which is generally NP-hard~\cite{choo2021complexity,hillar2013most}.
As a result, various approximate algorithms and optimization strategies have been proposed.
The seminal contributions by Lu et al.~\cite{lu2006multilinear,lu2008mpca} pioneered MPCA by decomposing tensor data via mode-wise matrix PCA, laying the foundation for multilinear analysis of high-dimensional data. 
Building on this, Allen~\cite{allen2012sparse} introduced two influential models, sparse HOSVD and sparse CP, that were among the first to incorporate sparsity-promoting penalties into the tensor PCA framework. 
These models enabled the discovery of interpretable low-rank structures while simultaneously performing feature selection. 
Subsequently, Lai et al.~\cite{lai2014multilinear} proposed the multilinear sparse PCA (MSPCA) method, which extended sparse PCA ideas to tensor-valued data and demonstrated effectiveness in applications such as face recognition.
While these methods have shown notable success in image and video analysis~\cite{liu2018improved}, brain signal processing~\cite{xu2024qr,xu2024utv}, biomedical data interpretation~\cite{zhou2016linked}, and large-scale sensor networks~\cite{rajesh2021data,zhang2021recovery}, they often fall short in handling computational efficiency across different tensor modes.
In particular, the construction and manipulation of large covariance matrices in high-dimensional settings result in significant memory and computational burdens.

In this paper, we propose a geometry-aware framework for efficiently approximating SHOPCA.
Our framework facilitates the use of existing optimization solvers in a plug-and-play manner, allowing for flexible and modular integration.
 Specifically, the framework comprises three main stages:
(i)~\textit{Tensor Preparation}: Given a SHOPCA problem, we unfold the tensor along each mode and formulate the corresponding sparse matrix PCA subproblems. Initial support sets are selected for each mode-unfolded matrix. 
(ii)~\textit{Geometric Solver for Subproblems}: For each mode-unfolded matrix, we reformulate the subproblem from a geometric perspective and solve a structured binary linear program to identify the most significant columns under sparsity constraints.
(iii)~\textit{Solution Construction}: We construct the core tensor and factor matrices by combining the solutions obtained across modes, yielding the final decomposition result.
These steps are illustrated in Figure~\ref{fig:flow}.

\begin{figure}[ht]
    \centering
    \includegraphics[width=\textwidth]{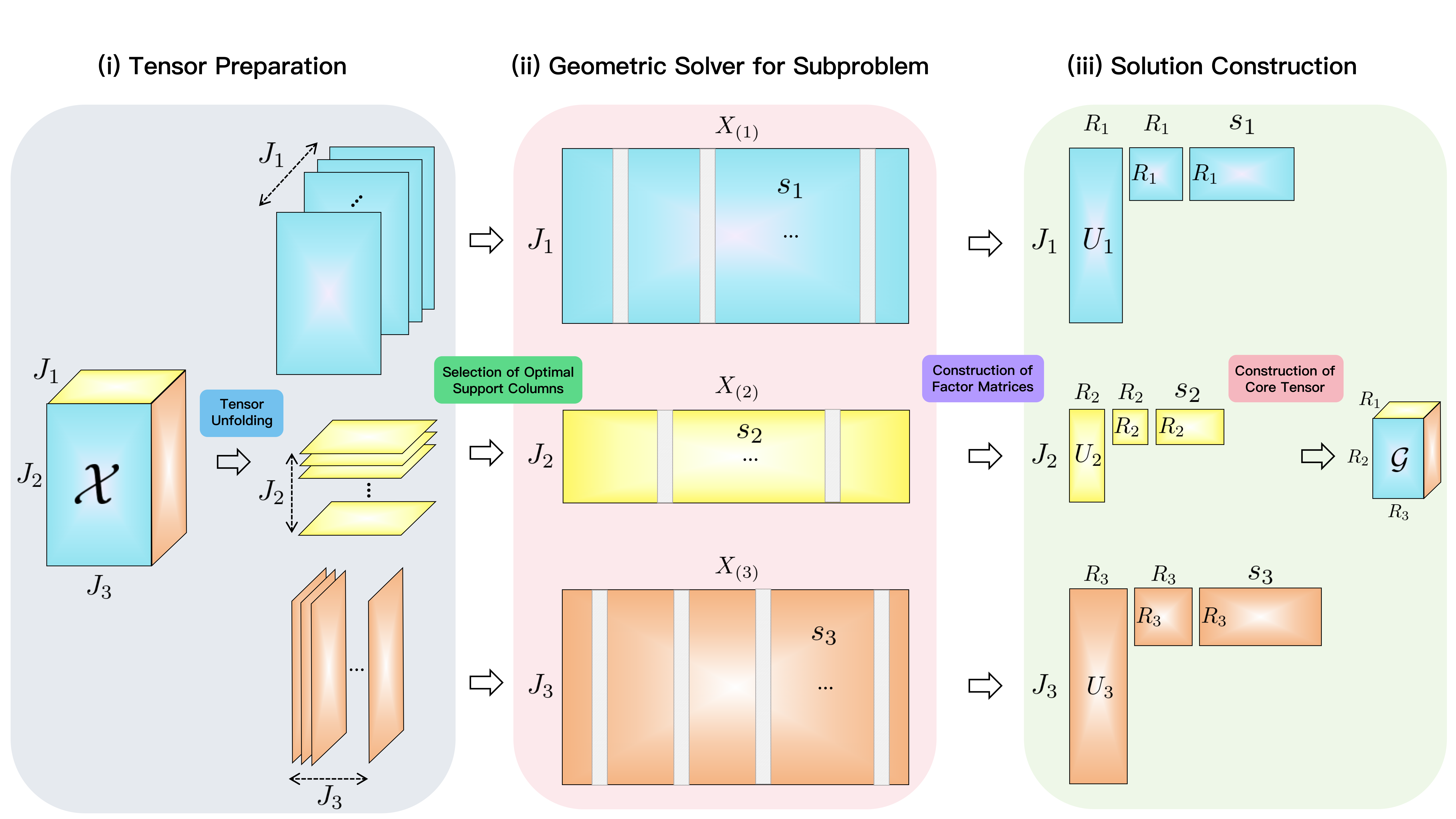}
    \caption{Illustration of the proposed \textit{sparseGeoHOPCA} pipeline on a third-order tensor. 
    (i) The tensor is unfolded along each mode to obtain matricized views. 
    (ii) A geometry-aware solver selects sparse column supports from each mode-wise unfolding. 
    (iii) Based on the selected supports, factor matrices and the core tensor are constructed to yield a sparse multilinear decomposition.}
    \label{fig:flow}
\end{figure}

Compared to prior approaches, our framework avoids explicit computation of large-scale covariance matrices, which are often infeasible in high-dimensional settings where the number of columns greatly exceeds the number of rows. 
This substantially improves computational efficiency while retaining strong approximation guarantees. 
The proposed method is more likely to find globally optimal solutions and achieve effective control of approximation error.

We theoretically justify the geometric subproblem reformulation in Section~\ref{sec: Framework of sparseGeoHOPCA}. Worst-case error bounds are derived in Section~\ref{sec: Worst-Case Upper Bound}, and computational complexity is analyzed in Section~\ref{sec: Complexity Analysis}. Extensive empirical evaluations in Section~\ref{sec: EMPIRICAL RESULTS} validate the effectiveness of the proposed method.

\textbf{Our contributions.} We summarize our contributions in the following:
\begin{itemize}[leftmargin=1em]
    \item We propose \textit{sparseGeoHOPCA}, a novel framework that introduces a geometric perspective to the SHOPCA problem.
    By formulating mode-wise column selection subproblems from a geometric viewpoint, we transform the original nonconvex sparse optimization into binary linear optimization problems. 
    This approach avoids explicit covariance matrix estimation and iterative deflation procedures, offering notable advantages in both computational efficiency and interpretability, especially in high-dimensional, unbalanced tensor settings.
    \item We theoretically establish the equivalence between the proposed geometric subproblems and the original sparse objective, and derive worst-case error bounds for SHOPCA based on residuals from classical PCA. 
    These results provide interpretable, data-dependent guarantees on the approximation quality of sparse projections.
    \item Our method significantly reduces both computational and memory overhead in high-dimensional regimes. 
    By adopting an alternating optimization strategy and solving structured support selection via geometric pruning for each tensor mode, the algorithm achieves a total computational complexity of $O\left(\sum_{n=1}^{N} (k_n^3 + J_n k_n^2)\right)$.
    This complexity scales linearly with the tensor size, ensuring strong scalability for large-scale tensor applications.
    \item We conduct extensive evaluations on support recovery, classification, and image reconstruction tasks. Experimental results demonstrate that \textit{sparseGeoHOPCA} achieves accurate support selection, maintains classification performance under 10× compression, and yields high-quality image reconstruction on ImageNet, highlighting its effectiveness, structural robustness, and generalization beyond tensor-specific applications.
\end{itemize}
%我们首次将几何视角引入SHOPCA问题，提出了一个名为 sparseGeoHOPCA 的新框架。该方法通过在张量的各个 mode 上构造几何化的列选择子问题，将原本复杂的非凸稀疏优化转化为可解的结构化整数规划问题，避免了协方差矩阵估计与逐步投影步骤，在高维不平衡张量数据中展现出显著的计算优势与解释性。

% 我们从理论上证明了所提几何子问题与原稀疏优化目标之间的等价性，并建立了基于经典PCA残差的worst-case得到SHOPCA的误差上界，为稀疏投影的近似质量提供了可解释的性能保证。

% 我们显著减少了在高维模式下的计算与存储开销。所提算法采用交替优化策略，从几何角度在每个 mode 上以结构化剪枝方式解决支持集选择问题，避免显式构造协方差矩阵，从而使总计算复杂度达到$O(\sum_{n=1}^{N} (k_n^3 + J_n k_n^2))$ 。这一复杂度相对于张量维度呈线性增长，具备良好的可扩展性，适用于大规模高维张量数据分析。

% 我们在支持恢复和分类任务中对方法进行了全面评估，实验结果显示，在复杂稀疏结构和10倍压缩下，sparseGeoHOPCA 在保持判别结构的同时，实现了远优于基线HOPCA的准确性和稀疏性控制。

% \textbf{Paper organization.} The remainder of this paper is organized as follows.
% Section~\ref{sec: Problem Setting and Motivation} formally introduces the SHOPCA problem and provides the motivation for our proposed approach.
% Section~\ref{sec: OUR APPROACH: sparseGeoHOPCA} details the proposed \textit{sparseGeoHOPCA} framework, demonstrates that sparse PCA subproblems admit equivalent formulations from a geometric perspective, and presents theoretical analyses on approximation error bounds and computational complexity.
% Proofs of the main results are deferred to the appendix. 
% Section~\ref{sec: EMPIRICAL RESULTS} presents empirical evaluations using both synthetic and real-world datasets to validate the effectiveness of the proposed \textit{sparseGeoHOPCA} framework. 
% We first assess support recovery performance under controlled simulations, followed by real-data experiments focused on classification tasks, with particular emphasis on preserving structural identifiability under high-dimensional compression, even at compression ratios as high as $10\times$.

\section{PROBLEM SETTING AND MOTIVATION}\label{sec: Problem Setting and Motivation}

In this section, we provide a formal definition of the sparse higher-order principal component analysis (SHOPCA) problem considered in this work, and we present the core motivation for integrating geometric feature selection techniques into a tensor-based decomposition framework.

\subsection{Notations and Definitions}\label{sec: Notations and Definitions}

Unless stated otherwise, we adopt the following notation: scalars are denoted by lowercase letters (e.g., $a, b$), vectors by bold lowercase letters (e.g., $\mathbf{v}$), matrices by uppercase letters (e.g., $M$), and tensors by calligraphic letters (e.g., $\mathcal{T}$).

Let $\mathcal{X} \in \mathbb{R}^{J_1 \times J_2 \times \cdots \times J_N}$ be an $N$-th order tensor. 
Its \textbf{mode-}$n$ \textbf{matricization}, denoted by $X_{(n)} \in \mathbb{R}^{J_n \times \prod_{i \ne n} J_i}$, rearranges the mode-$n$ fibers of $\mathcal{X}$ into columns via the unfolding operator $\mathbf{unfold}_n(\mathcal{X})$. 
The inverse operation $\mathbf{fold}_n(\cdot)$ reconstructs the tensor from its matricized form, satisfying $\mathcal{X} = \mathbf{fold}_n(X_{(n)})$.

The \textbf{mode-}$k$ product (also known as the Tucker product) of $\mathcal{X}$ with a matrix $U_k \in \mathbb{R}^{J_k \times R_k}$ is denoted by $\mathcal{Y} = \mathcal{X} \times_k U_k$, and produces a tensor of size $\mathbb{R}^{J_1 \times \cdots \times J_{k-1} \times R_k \times J_{k+1} \times \cdots \times J_N}$. 
This transformation projects the mode-$k$ fibers of $\mathcal{X}$ onto a lower-dimensional subspace, and its matrix representation is given by: $Y_{(k)} = U_k X_{(k)}$, where $Y_{(k)}$ is the mode-$k$ matricization of the resulting tensor $\mathcal{Y}$.

\subsection{Higher-Order Principal Components Analysis (HOPCA)}\label{sec: Higher-Order Principal Components Analysis (HOPCA)}

Higher-order principal components analysis (HOPCA) extends classical PCA to tensors via Tucker decomposition. 
Given $\mathcal{X} \in \mathbb{R}^{J_1 \times \cdots \times J_N}$, HOPCA approximates the tensor using:
\begin{equation}\label{equ: X-=-G}
    \mathcal{X} \approx \mathcal{G} \times_1 U_1 \times_2 U_2 \cdots \times_N U_N,
\end{equation}
where $\mathcal{G} \in \mathbb{R}^{R_1 \times \cdots \times R_N}$ is the core tensor and $U_n \in \mathbb{R}^{J_n \times R_n}$ are orthonormal factor matrices.
In component-wise form:
\begin{equation}\label{equ: X(i1,...,iN)}
\mathcal{X}(i_1,\ldots,i_N) = \sum_{\alpha_1=1}^{R_1}\cdots\sum_{\alpha_N=1}^{R_N} \mathcal{G}(\alpha_1,\ldots,\alpha_N) \prod_{n=1}^{N} U_n(i_n, \alpha_n).
\end{equation}
This representation drastically reduces the storage and computation cost from $\mathcal{O}(J_1 \cdots J_N)$ to $\mathcal{O}(R_1 \cdots R_N + \sum_{n=1}^{N} J_n R_n)$, offering significant advantages when $J_n \gg R_n$.
Optimization is typically achieved using higher-order SVD or alternating least squares (ALS), enabling effective and interpretable multilinear dimensionality reduction.

\subsection{Sparse Higher-Order Principal Components Analysis (SHOPCA)}\label{sec: Sparse Higher-Order Principal Components Analysis}

To enhance interpretability and robustness, sparse PCA has been extended to tensors via the SHOPCA framework. 
Let $\mathcal{X} \in \mathbb{R}^{J_1 \times \cdots \times J_N}$ be the data tensor and $U_n \in \mathbb{R}^{J_n \times R_n}$ the projection matrix for each mode. 
The objective is to minimize the projection error while enforcing sparsity and orthogonality:
\begin{equation}\label{equ: f(U_{1},U_{2},,U_{N})}
    f(U_1, \ldots, U_N) = \left\| \mathcal{X} - \mathcal{X} \times_1 U_1 U_1^\top \cdots \times_N U_N U_N^\top \right\|_F^2.
\end{equation}

The SHOPCA problem is thus formalized as:
\begin{equation}\label{equ: fU1U2UN}
\begin{aligned}
& \underset{U_1, \ldots, U_N}{\text{minimize}} 
& & f(U_1, \ldots, U_N) \\
& \text{subject to} 
& & \| U_n \|_0 \leq k_n, \quad U_n^\top U_n = I_{R_n}, \quad \text{for } n = 1, \ldots, N,
\end{aligned}
\end{equation}
where the $\ell_0$ constraint enforces shared row-sparsity across columns of $U_n$.

\paragraph{Motivating Idea of This Paper.}

The SHOPCA problem is inherently challenging due to its non-convex nature and lack of a closed-form solution.
Inspired by the alternating optimization strategy in HOPCA, we decompose the problem into $N$ independent subproblems (see Theorem~\ref{thm: optimization decomposition}).
Fixing all $U_m$ for $m \neq n$, the objective simplifies to the following sparse matrix approximation:
\begin{equation}\label{equ: X_n-U_nU_n^TX_n}
\begin{aligned}
& \underset{V_n}{\text{minimize}} 
& & \left\| X_{(n)} -   U_n U_n^\top X_{(n)}\right\|_F^2 \\
& \text{subject to} 
& & \|U_n\|_0 \leq k_n,\quad U_n^\top U_n = I_{R_n},
\end{aligned}
\end{equation}
where $X_{(n)}$ is the mode-$n$ unfolding of $\mathcal{X}$. In practice, when $J_n \ll \prod_{i \ne n} J_i$, traditional sparse PCA methods relying on covariance estimation become inefficient.

To overcome this bottleneck, we integrate the \emph{GeoSPCA} method into our tensor framework. 
GeoSPCA circumvents the need for covariance computation and iterative deflation by directly selecting the columns of $X_{(n)}$ with the largest Frobenius norms. 
The column selection problem is formulated as:
\begin{equation}
\label{equ:eta-constrained-problem}
\begin{aligned}
\underset{\mathbf{s} \in \{0,1\}^{\prod_{i \ne n} J_i}}{\text{maximize}} \quad & 
\sum_{j=1}^{\prod_{i \ne n} J_i} s_j \left\| X_{(n)}(:,j) \right\|_F^2 \\
\text{subject to} \quad & 
\mathbf{e}^\top \mathbf{s} \leq k_n,\quad 
\forall \sigma^n \subset \left[\prod_{i \ne n} J_i\right],\ 
\eta(\mathbf{s}^{\sigma^n}) > \eta \Rightarrow \sum_{j \in \sigma^n} s_j \leq |\sigma^n| - 1,
\end{aligned}
\end{equation}

where $\eta(\mathbf{s}) = \left\| X_{(n)}(:,\mathbf{s}) - U_n[\mathbf{s}] U_n[\mathbf{s}]^\top X_{(n)}(:,\mathbf{s}) \right\|_F^2$ measures the reconstruction error over the selected column subset, and  $U_n[\mathbf{s}]$ denotes the optimal projection basis obtained by solving $U_n[\mathbf{s}] = \arg\max_{U \in \mathbb{R}^{J_n \times R_n}} \operatorname{tr}\left(U^{\top} X_{(n)}(:,\mathbf{s}) X_{(n)}(:,\mathbf{s})^{\top} U\right) \quad \text{subject to} \quad U^{\top} U = I_{R_n}$.
Here, $\mathbf{e} \in \mathbb{R}^{\prod_{i \ne n} J_i}$ is an all-ones vector that enforces the sparsity constraint, and $\left[\prod_{i \ne n} J_i\right]$ denotes the index set $\{1, 2, \ldots, \prod_{i \ne n} J_i\}$ of all columns in $X_{(n)}$.
Each subset $\sigma^n$ represents a candidate column group that is excluded if its reconstruction error exceeds the threshold $\eta$.  
Details are further discussed in Theorem~\ref{thm: column optimaztion}.

While GeoSPCA was originally introduced for matrix-based sparse PCA, our main contribution lies in its adaptation to the multilinear tensor setting. 
This integration yields a scalable, non-iterative, and interpretable approach for sparse tensor decomposition, particularly effective in high-dimensional and unbalanced scenarios.

\section{OUR APPROACH: sparseGeoHOPCA}\label{sec: OUR APPROACH: sparseGeoHOPCA}
In this section, we propose \textit{sparseGeoHOPCA}, a geometry-aware framework for solving the SHOPCA problem.
The method combines alternating optimization with structured column selection inspired by GeoSPCA, enabling efficient and interpretable sparse tensor decomposition.

Section~\ref{sec: Algorithms} outlines the overall algorithm. 
Section~\ref{sec: Framework of sparseGeoHOPCA} details the theoretical foundation, including mode-wise decoupling and sparse PCA reformulation. 
Section~\ref{sec: Worst-Case Upper Bound} provides approximation error bounds.
Section~\ref{sec: Complexity Analysis} discusses the computational complexity.

\subsection{Algorithms}\label{sec: Algorithms}

In this section, we present the proposed algorithmic framework, \textbf{sparseGeoHOPCA} (Geometry-Aware Sparse Higher-Order PCA via Alternating Optimization), designed to solve the sparse higher-order principal component analysis (SHOPCA) problem defined in ~\eqref{equ: fU1U2UN}.

Given an input tensor $\mathcal{X} \in \mathbb{R}^{J_1 \times J_2 \times \cdots \times J_N}$, target Tucker ranks $(R_1, \ldots, R_N)$, sparsity parameters $\{k_n\}_{n=1}^N$, a tolerance threshold $\eta$, and optional initial exclusion sets for each mode, our goal is to identify sparse orthonormal factor matrices $\{U_n\}_{n=1}^N$ and a compact core tensor $\mathcal{G}$.

To this end, we employ an alternating optimization strategy that decouples the problem into $N$ independent subproblems, each corresponding to one tensor mode. 
For the $n$-th mode, we unfold the tensor into its mode-$n$ matricization $X_{(n)}$ and formulate a binary linear optimization (BLO) problem $\phi^{(n)}$ to select a support vector $\mathbf{s}^0$ that determines a subset of dominant columns.

At each iteration $t$, we compute the approximation error $\left\| X_{(n)}(:,\mathbf{s}^t) - U_{n}[\mathbf{s}^t] U_{n}[\mathbf{s}^t]^\top X_{(n)}(:,\mathbf{s}^t) \right\|_F^2$.
% (In practical implementations, we apply truncation and zero-padding techniques to ensure that $U_n(:,\mathbf{s}^t)$ is computationally feasible).
%实际算法中，我们通过截断和补零让$U_{n}(:,s^t)$具有可操作性
If this error exceeds the tolerance $\eta$, we refine the BLO problem by adding a constraint to exclude the current support $\sigma^t$, and resolve $\phi^{(n)}$ to obtain a new support $\mathbf{s}^{t+1}$. 
This process continues until the approximation error falls below the threshold.

Once all sparse factor matrices $\{U_n\}_{n=1}^N$ are obtained, the Tucker core tensor is computed as:
\begin{equation}
    \mathcal{G} = \mathcal{X} \times_1 U_1^\top \times_2 U_2^\top \cdots \times_N U_N^\top.
\end{equation}

The complete procedure of the sparseGeoHOPCA algorithm is summarized in Algorithm~\ref{alg:GeoSPCA-Tucker}.

\begin{algorithm}[htbp]
\DontPrintSemicolon
\KwInput{Tensor $\mathcal{X} \in \mathbb{R}^{J_1 \times J_2 \times \cdots \times J_N}$; target ranks $(R_1, \ldots, R_N)$; sparsity levels $\{k_n\}_{n=1}^N$; tolerance parameter $\eta$; optional initial exclusion sets.}
\KwOutput{Sparse factor matrices $\{U_n\}_{n=1}^N$ and core tensor $\mathcal{G}$ from Tucker decomposition.}

\For{$n = 1$ to $N$}{
    Matricize $\mathcal{X}$ along mode $n$ to obtain $X_{(n)} \in \mathbb{R}^{J_n \times \prod_{i \neq n} J_i}$\;
    Formulate the BLO problem $\phi^{(n)}$:
    $$
    \max_{\mathbf{s} \in \{0,1\}^{\prod_{i \neq n} J_i}} \sum_{i=1}^{\prod_{i \neq n} J_i} s_i \| X_{(n)}(:,i) \|_{F}^2, \quad \text{s.t.} \quad \mathbf{e}^\top \mathbf{s} \leq k_n
    $$
    Solve $\phi^{(n)}$ to obtain an optimal support vector $s^0$\;
    Compute $U_{n}[s^0]$ by applying PCA to $X_{(n)}(:,s^0)$\;
    
    \While{$\| X_{(n)}(:,\mathbf{s}^t) - U_{n}[\mathbf{s}^t] U_{n}[\mathbf{s}^t]^\top X_{(n)}(:,\mathbf{s}^t) \|_F^2 > \eta$}{
        Add constraint $\sum_{i \in \sigma^{n}_t} s_i \leq |\sigma_t^n| - 1$ to $\phi^{(n)}$\;
        Re-solve $\phi^{(n)}$ to obtain new support $\mathbf{s}^{t+1}$\;
        Compute $U_{n}[\mathbf{s}^{t+1}]$ via PCA on $X_{(n)}(:,\mathbf{s}^{t+1})$\;
    }
    Set $U_n \in \mathbb{R}^{J_{n}\times R_{n}}$ as the final solution for mode $n$\;
}

Compute the core tensor as:
$$
\mathcal{G} = \mathcal{X} \times_1 U_1^\top \times_2 U_2^\top \cdots \times_N U_N^\top
$$

\Return $\{U_n\}_{n=1}^N$, $\mathcal{G}$\;

\caption{sparseGeoHOPCA: Geometry-Aware Sparse Higher-Order PCA via Alternating Optimization}
\label{alg:GeoSPCA-Tucker}
\end{algorithm}

\subsection{Framework of \textit{sparseGeoHOPCA}}\label{sec: Framework of sparseGeoHOPCA}

To address the sparse higher-order principal component analysis (SHOPCA) problem defined in~\eqref{equ: fU1U2UN}, we propose a geometry-aware alternating optimization framework named \textit{sparseGeoHOPCA}. 
This method iteratively decouples the full tensor optimization problem into a sequence of sparse subproblems, each corresponding to a single tensor mode.
The decomposition is formally established in the following result.

\begin{theorem}\label{thm: optimization decomposition}
Let $(U_1, \ldots, U_{n-1}, U_{n+1}, \ldots, U_N)$ be fixed. 
Then the optimization of $U_n$ in~\eqref{equ: fU1U2UN} reduces to the sparse matrix approximation problem given in ~\eqref{equ: X_n-U_nU_n^TX_n}.
\end{theorem}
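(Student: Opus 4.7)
The plan is to matricize the full Tucker objective along mode $n$, exploit the orthonormality of the fixed factors to obtain a Pythagorean decomposition, and isolate the $U_n$-dependent part as a sparse PCA problem on the mode-$n$ unfolding.

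First, I would apply the standard mode-$n$ matricization identity for the Tucker product,
\begin{equation*}
\mathbf{unfold}_n\bigl(\mathcal{X} \times_1 U_1 U_1^\top \cdots \times_N U_N U_N^\top\bigr) = U_n U_n^\top X_{(n)} P,
\end{equation*}
where $P = \bigotimes_{m \neq n} U_m U_m^\top$ is arranged in the Kronecker order matching the unfolding convention. Because each $U_m$ satisfies $U_m^\top U_m = I_{R_m}$, each factor $U_m U_m^\top$ is an orthogonal projection, so $P$ itself is symmetric and idempotent ($P = P^\top = P^2$). Using the Frobenius-norm invariance of unfolding, the original objective becomes $f = \|X_{(n)} - U_n U_n^\top X_{(n)} P\|_F^2$.

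Next, I would split the data matrix as $X_{(n)} = X_{(n)} P + X_{(n)}(I - P)$ and verify, using $P(I - P) = 0$ together with the cyclic property of the trace, that the two blocks $(I - U_n U_n^\top) X_{(n)} P$ and $X_{(n)}(I - P)$ are Frobenius-orthogonal. This yields the Pythagorean identity
\begin{equation*}
f = \bigl\|(I - U_n U_n^\top) X_{(n)} P\bigr\|_F^2 + \bigl\|X_{(n)}(I - P)\bigr\|_F^2,
\end{equation*}
in which only the first term involves $U_n$. The mode-$n$ subproblem thus becomes minimization of $\|(I - U_n U_n^\top) X_{(n)} P\|_F^2$ under the sparsity and orthogonality constraints of~\eqref{equ: fU1U2UN}.

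Finally, I would identify this reduced problem with~\eqref{equ: X_n-U_nU_n^TX_n}. The main obstacle lies in this last step: a strict ALS-style reduction yields sparse PCA on the compressed matrix $X_{(n)} P$, whereas~\eqref{equ: X_n-U_nU_n^TX_n} is stated directly in terms of $X_{(n)}$. I would bridge the gap by invoking the truncated-HOSVD decoupling convention that underlies Algorithm~\ref{alg:GeoSPCA-Tucker}, under which each mode's factor is extracted from the dominant left subspace of $X_{(n)}$ itself; formally this corresponds to substituting $P \mapsto I$ in the reduced objective (equivalently, using the constant $\|X_{(n)}(I-P)\|_F^2 = 0$ regime obtained at the first sweep, where the remaining $U_m$ are initialized as full bases of the row space), at which point the optimization collapses exactly to the sparse matrix approximation problem stated in~\eqref{equ: X_n-U_nU_n^TX_n}.
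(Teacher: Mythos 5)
Your proof is correct and follows essentially the same route as the paper: an orthogonal (Pythagorean) decomposition of the objective using the idempotent projections $U_m U_m^\top$ of the fixed modes (the paper does this by telescoping one mode at a time), after which only the $U_n$-dependent term remains. The obstacle you flag in the final step --- that the strict ALS reduction yields sparse PCA on $X_{(n)}P$ rather than on $X_{(n)}$ itself --- is real, but the paper's own proof makes exactly the same jump silently (it simply declares the projected objective ``equivalent'' to $\left\| X_{(n)} - U_n U_n^\top X_{(n)} \right\|_F^2$), so your explicit $P \mapsto I$ (HOSVD-style, first-sweep) convention is, if anything, a more candid treatment of the identification the theorem intends.
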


\begin{proof}
See Appendix~\ref{app: appendix a}.
\end{proof}

To solve this subproblem, we adopt a column selection strategy inspired by the \textit{GeoSPCA} method~\cite{bertsimas2022sparse}. 
The key idea is to transform the original sparse optimization into a binary linear program with geometric exclusion constraints, which can be efficiently solved using standard integer programming techniques.
The following theorem justifies this reformulation.

\begin{theorem}\label{thm: column optimaztion}
Let $s^{0}$ be an optimal solution to problem~\eqref{equ: X_n-U_nU_n^TX_n}. Then there exists a constant $\delta > 0$ such that for any $\eta \in [\eta(\mathbf{s}^{0}), \eta(\mathbf{s}^{0}) + \delta]$, any optimal solution to the selection problem in~\eqref{equ:eta-constrained-problem} is also an optimal solution to problem~\eqref{equ: X_n-U_nU_n^TX_n}.
\end{theorem}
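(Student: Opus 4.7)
The plan is to exploit the finiteness of the combinatorial support space: for $\eta$ sufficiently close to $\eta(\mathbf{s}^0)$, the feasible region of the geometric selection problem~\eqref{equ:eta-constrained-problem} collapses onto the set of optima of~\eqref{equ: X_n-U_nU_n^TX_n}, so every maximizer of the geometric objective is forced to be an original optimum.

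First, I would reinterpret~\eqref{equ: X_n-U_nU_n^TX_n} as a discrete optimization in the support variable $\mathbf{s}\in\{0,1\}^{\prod_{i\neq n}J_i}$ with $\mathbf{e}^\top\mathbf{s}\leq k_n$: each such $\mathbf{s}$ induces the PCA basis $U_n[\mathbf{s}]$ of the selected columns, and the original objective reduces to $\eta(\mathbf{s})$. Optimality of $\mathbf{s}^0$ then means $\eta^0:=\eta(\mathbf{s}^0)$ is the minimum of $\eta(\cdot)$ over this domain, with optimal set $\mathcal{S}_0:=\{\mathbf{s}:\mathbf{e}^\top\mathbf{s}\leq k_n,\ \eta(\mathbf{s})=\eta^0\}$. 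Next, I would unpack the exclusion constraint of~\eqref{equ:eta-constrained-problem}: taking $\sigma^n=\supp(\mathbf{s})$ in the contrapositive shows that every feasible $\mathbf{s}$ satisfies $\eta(\mathbf{s})\leq\eta$, so the feasible set of~\eqref{equ:eta-constrained-problem} is contained in $\mathcal{F}_\eta:=\{\mathbf{s}:\mathbf{e}^\top\mathbf{s}\leq k_n,\ \eta(\mathbf{s})\leq\eta\}$.

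The pivotal step is the finiteness argument. Because the feasible support space is finite, the image $E:=\{\eta(\mathbf{s}):\mathbf{e}^\top\mathbf{s}\leq k_n\}$ is a finite subset of $\mathbb{R}$ with $\eta^0=\min E$. Define
\[
\gamma := \min\bigl\{\eta(\mathbf{s})-\eta^0 : \mathbf{e}^\top\mathbf{s}\leq k_n,\ \eta(\mathbf{s})>\eta^0\bigr\} > 0
\]
(with the convention $\gamma=+\infty$ when $\eta^0$ is the unique value in $E$) and choose any $\delta\in(0,\gamma)$. For every $\eta\in[\eta^0,\eta^0+\delta]$ the inequality $\eta(\mathbf{s})\leq\eta$ then forces $\eta(\mathbf{s})=\eta^0$, so $\mathcal{F}_\eta=\mathcal{S}_0$. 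Consequently every feasible—and hence every optimal—solution of~\eqref{equ:eta-constrained-problem} already lies in $\mathcal{S}_0$ and is optimal for~\eqref{equ: X_n-U_nU_n^TX_n}.

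The main obstacle is the stronger subset form of the exclusion constraint: feasibility for~\eqref{equ:eta-constrained-problem} demands $\eta(\mathbf{s}^{\sigma^n})\leq\eta$ for \emph{every} $\sigma^n\subseteq\supp(\mathbf{s})$, not merely for $\sigma^n=\supp(\mathbf{s})$. One therefore has to guarantee that at least one $\mathbf{s}\in\mathcal{S}_0$ survives this stronger test inside $[\eta^0,\eta^0+\delta]$, which pits $\delta<\gamma$ against the possibly larger quantity $\max_{\sigma^n\subseteq\supp(\mathbf{s})}\eta(\mathbf{s}^{\sigma^n})-\eta^0$. Both numbers are finite and positive by discreteness, and the reconciliation is carried out either by singling out an $\mathbf{s}\in\mathcal{S}_0$ whose subset errors are minimal, or—consistent with the cutting-plane view of Algorithm~\ref{alg:GeoSPCA-Tucker}—by treating the subset constraints as generated dynamically only when currently violated. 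Resolving this tension is the subtle bookkeeping that makes the argument go through.
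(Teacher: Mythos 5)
Your reduction of \eqref{equ: X_n-U_nU_n^TX_n} to a discrete problem in $\mathbf{s}$ is the right first move, but you then mischaracterize what optimality of $\mathbf{s}^0$ means, and this breaks the core of the argument. After fixing the support, the original objective does \emph{not} reduce to $\eta(\mathbf{s})$: as in the paper's Step 1, minimizing $\| X_{(n)} - U_n U_n^\top X_{(n)}\|_F^2$ under the sparsity constraint is equivalent to \emph{maximizing} the explained variance $\sum_j s_j \sum_k (X_{(n)}(:,j)^\top U_n(:,k))^2$, whose optimal value for a given support is $\|X_{(n)}(:,\mathbf{s})\|_F^2 - \eta(\mathbf{s})$. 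So $\mathbf{s}^0$ maximizes $\|X_{(n)}(:,\mathbf{s})\|_F^2 - \eta(\mathbf{s})$; it is \emph{not} a minimizer of $\eta(\cdot)$, and in general $\eta(\mathbf{s}^0) > \min_{\mathbf{s}} \eta(\mathbf{s})$ (e.g.\ a support of $k_n$ near-zero or rank-$R_n$ columns has $\eta(\mathbf{s}) \approx 0$). Consequently your pivotal claim $\mathcal{F}_\eta = \mathcal{S}_0$ fails: for $\eta \in [\eta(\mathbf{s}^0), \eta(\mathbf{s}^0)+\delta]$ the set $\{\mathbf{s} : \mathbf{e}^\top\mathbf{s}\le k_n,\ \eta(\mathbf{s})\le \eta\}$ contains every support with \emph{smaller} reconstruction error, most of which are nowhere near optimal for \eqref{equ: X_n-U_nU_n^TX_n}. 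Your argument would then "prove" that every \emph{feasible} point of \eqref{equ:eta-constrained-problem} is optimal for the original problem, which is false. The missing idea is precisely the objective of \eqref{equ:eta-constrained-problem}: the paper's proof first notes that $\mathbf{s}^0$ is feasible at $\eta = \eta(\mathbf{s}^0)$, so any optimal $\mathbf{s}'$ of the selection problem satisfies $\sum_j s'_j\|X_{(n)}(:,j)\|_F^2 \ge \sum_j s^0_j\|X_{(n)}(:,j)\|_F^2$; if $\mathbf{s}'$ had strictly smaller explained variance than $\mathbf{s}^0$, subtracting gives $\eta(\mathbf{s}') > \eta(\mathbf{s}^0) = \eta$, contradicting feasibility. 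Your proposal never uses the norm-maximization objective at all, so this step cannot be recovered from what you wrote.

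Two further remarks. The "main obstacle" you flag at the end is actually a non-issue: $\eta$ is monotone under adding columns (projecting a subset of the selected columns can only have smaller residual than the full selection), so for $\sigma^n \subseteq \supp(\mathbf{s})$ one has $\eta(\mathbf{s}^{\sigma^n}) \le \eta(\mathbf{s})$, and the only binding exclusion constraint is $\sigma^n = \supp(\mathbf{s})$; this is also what makes $\mathbf{s}^0$ feasible at $\eta=\eta(\mathbf{s}^0)$, a fact the correct proof needs. Finally, your finiteness argument for the existence of $\delta>0$ (a positive gap above $\eta(\mathbf{s}^0)$ among the finitely many values of $\eta(\cdot)$, so the feasible set is unchanged on $[\eta(\mathbf{s}^0),\eta(\mathbf{s}^0)+\delta]$) is sound and is essentially the paper's second step; it just needs to be attached to the contradiction argument above rather than to the incorrect identification of the optimal set with $\arg\min_{\mathbf{s}}\eta(\mathbf{s})$.
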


\begin{proof}
See Appendix~\ref{app: appendix b}.
\end{proof}

\subsection{Worst-Case Upper Bound}\label{sec: Worst-Case Upper Bound}

While the alternating optimization framework of \textit{sparseGeoHOPCA} guarantees convergence to a locally optimal solution, it remains crucial to understand the quality of the solution obtained in each mode. 
In particular, we are interested in quantifying how well the sparse projection subspace captures the original data variance compared to its dense (classical PCA) counterpart.

To this end, we establish a worst-case upper bound on the reconstruction error $\eta(\mathbf{s}^0)$ obtained from solving the sparse subproblem~\eqref{equ:eta-constrained-problem}.
The bound, obtained a priori, leverages the residual matrix from classical PCA and highlights the connection between sparsity-induced loss and the spectral structure of the original data.

\begin{theorem}\label{thm: matrix error}
Let $(\mathbf{s}^{0}, U_{n}[\mathbf{s}^{0}])$ be an optimal solution to problem~\eqref{equ:eta-constrained-problem}. 
Consider the classical PCA solution $V^{*} \in \underset{V \in \mathbb{R}^{J_{n} \times R_{n}}}{\arg\min} \left\| X_{(n)} - V V^{\top} X_{(n)} \right\|_F \quad \text{subject to} \quad V^{\top}V = I_{R_n}$, and define the associated residual matrix as $\epsilon^{n} = X_{(n)} - V^{*} V^{*\top} X_{(n)}$.
Let $\sigma^{n} \subset \left[ \prod_{i \neq n} J_i \right]$ denote the indices corresponding to the $k_n$ columns of $\epsilon^{n}$ with the highest norm.
Then, the following inequality holds:
\begin{equation}\label{equ: epsilon n bound}
\eta(\mathbf{s}^{0}) \leq \left\| S^{0} \epsilon^{n} \right\|_{F}^{2} \leq \sum_{i \in \sigma^{n}} \left\| \epsilon^{n}_{i} \right\|_{F}^2,
\end{equation}
where $\epsilon^{n}_{i}$ is the $i$-th column of the residual matrix $\epsilon^{n}$, and $S^{0} = \diag (\mathbf{s}^{0})$. Moreover, the solution $\mathbf{s}^{0}$ is feasible for problem~\eqref{equ:eta-constrained-problem} when $\eta = \|S^{0} \epsilon^{n}\|_F^2$.
\end{theorem}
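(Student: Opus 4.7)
The plan is to establish the two inequalities separately and then verify feasibility, exploiting the fact that $V^{*}$ is an orthonormal matrix that is admissible (though in general suboptimal) as a projection basis for \emph{any} column subset of $X_{(n)}$.

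For the first inequality, I would unpack the definition $\eta(\mathbf{s}^{0})=\|X_{(n)}(:,\mathbf{s}^{0})-U_{n}[\mathbf{s}^{0}]U_{n}[\mathbf{s}^{0}]^{\top}X_{(n)}(:,\mathbf{s}^{0})\|_{F}^{2}$ and recall that $U_{n}[\mathbf{s}^{0}]$ is defined as the classical PCA minimizer on the selected columns. In particular, substituting the feasible point $V=V^{*}$ into that inner PCA problem yields a (generally larger) reconstruction error, so
\begin{equation*}
\eta(\mathbf{s}^{0}) \;\leq\; \bigl\|X_{(n)}(:,\mathbf{s}^{0})-V^{*}V^{*\top}X_{(n)}(:,\mathbf{s}^{0})\bigr\|_{F}^{2}.
\end{equation*}
Because the Frobenius norm decomposes column-wise and $S^{0}$ simply picks out the columns with $s^{0}_{i}=1$, the right-hand side is exactly $\|\epsilon^{n}S^{0}\|_{F}^{2}=\sum_{i:\,s^{0}_{i}=1}\|\epsilon^{n}_{i}\|_{F}^{2}$, which establishes the middle quantity in the chain.

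For the second inequality, I would simply use the defining property of $\sigma^{n}$: it indexes the $k_{n}$ columns of $\epsilon^{n}$ with the largest norms. Since $\mathbf{s}^{0}$ is feasible for the outer problem, the cardinality constraint $\mathbf{e}^{\top}\mathbf{s}^{0}\leq k_{n}$ forces the support of $\mathbf{s}^{0}$ to be a subset of size at most $k_{n}$. Any top-$k_{n}$ selection with respect to $\|\epsilon^{n}_{i}\|_{F}^{2}$ therefore majorizes the sum over the support of $\mathbf{s}^{0}$, giving $\sum_{i:\,s^{0}_{i}=1}\|\epsilon^{n}_{i}\|_{F}^{2}\leq \sum_{i\in\sigma^{n}}\|\epsilon^{n}_{i}\|_{F}^{2}$. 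This part is essentially combinatorial and should be routine.

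The part that requires a bit more care, and is the main obstacle I foresee, is the feasibility assertion when $\eta=\|S^{0}\epsilon^{n}\|_{F}^{2}$. The exclusion constraints require that for every subset $\sigma\subseteq[\prod_{i\neq n}J_{i}]$ contained in $\mathrm{supp}(\mathbf{s}^{0})$, we have $\eta(\mathbf{s}^{\sigma})\leq \eta$. I would establish a monotonicity lemma: if $T\subseteq T'$, then $\eta(\mathbf{s}^{T})\leq \eta(\mathbf{s}^{T'})$, by taking the optimal projector $U_{n}[\mathbf{s}^{T'}]$ for $T'$, using it as a feasible (suboptimal) projector for $T$, and noting that the sum of squared per-column residuals over $T$ is bounded by the corresponding sum over $T'$. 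Combined with the first inequality, this gives $\eta(\mathbf{s}^{\sigma})\leq\eta(\mathbf{s}^{0})\leq\|S^{0}\epsilon^{n}\|_{F}^{2}=\eta$ for any such $\sigma$, which verifies all exclusion constraints and hence feasibility. The cardinality constraint is inherited from the optimality of $\mathbf{s}^{0}$.
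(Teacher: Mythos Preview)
Your argument is correct and follows essentially the paper's route: for the first inequality both you and the paper replace the optimal projector $U_{n}[\mathbf{s}^{0}]$ by the feasible but suboptimal $V^{*}$, and the second inequality is immediate in both. Your feasibility argument is in fact more complete than the paper's---the paper only verifies $\eta(\mathbf{s}^{0})\le\eta$ for the full support, whereas you correctly observe that the exclusion constraints must hold for \emph{every} subset $\sigma\subseteq\mathrm{supp}(\mathbf{s}^{0})$ and close this with the monotonicity lemma $T\subseteq T'\Rightarrow\eta(\mathbf{s}^{T})\le\eta(\mathbf{s}^{T'})$.
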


\begin{proof}
See Appendix~\ref{app: appendix c}.
\end{proof}

\noindent
The residual matrix $\epsilon^{n}$ can be efficiently computed using classical PCA, making the bound on $\eta(\mathbf{s}^0)$ both practical and interpretable. The result provides a data-dependent certificate of approximation quality under sparsity constraints.

We now extend this worst-case characterization to the full multilinear setting. The following result provides a global upper bound on the total approximation error of the \textit{sparseGeoHOPCA} decomposition in terms of the mode-wise PCA residuals.

\begin{theorem}\label{thm: tensor error bound}
Let $\mathcal{X} \in \mathbb{R}^{J_{1} \times \cdots \times J_{N}}$ be an $N$-mode tensor, and let $\{U_n\}_{n=1}^{N}$ be the sparse projection matrices computed by Algorithm~\ref{alg:GeoSPCA-Tucker}. 
Then, the total reconstruction error $f(U_1, U_2, \ldots, U_N)$ in~\eqref{equ: fU1U2UN} is upper bounded by the sum of leading residual energies from classical PCA applied to the mode-$n$ unfoldings $X_{(n)}$ of $\mathcal{X}$:
\begin{equation}\label{equ: tensor error bound}
f(U_1, U_2, \ldots, U_N) \leq \sum_{n=1}^{N} \sum_{i \in \sigma^{n}} \left\| \epsilon_i^{n} \right\|^2,
\end{equation}
where $\epsilon^{n} = X_{(n)} - V^{n} {V^{n}}^{\top} X_{(n)}$ is the residual matrix from classical PCA, and $\sigma^{n} \subset [\prod_{i \ne n} J_i]$ contains the indices of the $k_n$ columns of $\epsilon^{n}$ with the highest norm.
\end{theorem}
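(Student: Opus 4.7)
The plan is to reduce the tensor-level reconstruction error to a sum of mode-wise matrix errors, and then invoke Theorem~\ref{thm: matrix error} separately on each mode. Setting $P_n = U_n U_n^\top$, define the telescoping sequence $\mathcal{Y}_0 = \mathcal{X}$ and $\mathcal{Y}_n = \mathcal{Y}_{n-1} \times_n P_n$, so that $\mathcal{Y}_N = \mathcal{X} \times_1 P_1 \times_2 \cdots \times_N P_N$ and $f(U_1,\ldots,U_N) = \|\mathcal{X} - \mathcal{Y}_N\|_F^2$.

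First, I would establish a Pythagorean identity
\[
\|\mathcal{X} - \mathcal{Y}_N\|_F^2 \;=\; \sum_{n=1}^{N} \|\mathcal{Y}_{n-1} - \mathcal{Y}_n\|_F^2
\]
by showing that the $N$ successive differences are pairwise orthogonal under the Frobenius inner product. For indices $n < m$, both $\mathcal{Y}_{m-1}$ and $\mathcal{Y}_m$ already carry the projection $P_n$ along mode $n$, whereas $\mathcal{Y}_{n-1} - \mathcal{Y}_n$ lies in the range of $I - P_n$ along that same mode. Taking mode-$n$ unfoldings of both differences produces a cross term proportional to $(I - P_n)P_n = 0$.

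Second, for each fixed $n$ the mode-$n$ unfolding satisfies $(\mathcal{Y}_{n-1} - \mathcal{Y}_n)_{(n)} = (I - P_n)\,X_{(n)}\,M_n$, where $M_n$ is a Kronecker product built from $P_1, \ldots, P_{n-1}$ (padded with identities in the remaining modes). Since Kronecker products of orthogonal projections have operator norm at most one, the standard inequality $\|AM\|_F \leq \|A\|_F \|M\|_{\mathrm{op}}$ yields the contraction
\[
\|\mathcal{Y}_{n-1} - \mathcal{Y}_n\|_F^2 \;\leq\; \|(I - P_n) X_{(n)}\|_F^2 \;=\; \|X_{(n)} - U_n U_n^\top X_{(n)}\|_F^2.
\]

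Third, I would apply Theorem~\ref{thm: matrix error} mode by mode to bound the right-hand side by $\sum_{i \in \sigma^n} \|\epsilon_i^n\|^2$, and then sum over $n$ to obtain the stated inequality.

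The main obstacle is the first step: verifying the pairwise orthogonality in the tensor setting requires careful bookkeeping of mode-wise unfoldings via Kronecker-product identities, and one must confirm that projections applied along different modes genuinely commute with $I - P_n$ before the annihilation $P_n(I - P_n) = 0$ can be invoked. A purely triangle-inequality fallback would introduce an extra factor of $N$ that the stated bound does not permit, so the Pythagorean argument seems essential.
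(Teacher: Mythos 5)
Your argument is correct and follows essentially the same route as the paper: the paper first bounds $f(U_1,\ldots,U_N)$ by $\sum_{n=1}^{N}\left\| X_{(n)} - U_n U_n^\top X_{(n)} \right\|_F^2$ (citing \cite{che2025efficientsiam,kolda2009tensor} for this mode-wise reduction rather than proving it) and then invokes Theorem~\ref{thm: matrix error} mode by mode, exactly as in your third step. The only difference is that you prove the reduction in-line via the telescoping/Pythagorean decomposition and the operator-norm contraction of the Kronecker factor, which is precisely the standard proof of the inequality the paper cites.
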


\begin{proof}
See Appendix~\ref{app: appendix d}.
\end{proof}

\subsection{Complexity Analysis}\label{sec: Complexity Analysis}

The computational complexity of Algorithm~\ref{alg:GeoSPCA-Tucker} is primarily determined by its alternating optimization framework. In particular, the computational bottleneck lies in solving the subproblem formulated in problem~\eqref{equ:eta-constrained-problem}. The complexity of this subproblem can be analyzed in two parts: (i) the total number of generated cutting planes (cuts), and (ii) the computational cost per iteration.

In the worst case, the number of cuts can grow exponentially. Each iteration involves two main operations: (1) solving a binary linear optimization (BLO) problem, and (2) executing a separation oracle to detect violated constraints. The separation step requires solving a classical PCA problem over a matrix of dimension $J_n \times k_n$, which can be efficiently handled using singular value decomposition (SVD) with a computational cost of $O(k_n^3 + J_n k_n^2)$. According to the analysis in~\cite{bertsimas2022sparse}, the BLO problem can be solved using a tree search algorithm with a worst-case complexity of $O(k_n)$.
Therefore, the per-iteration complexity of solving Problem~\eqref{equ:eta-constrained-problem} is dominated by the cost of the SVD and is approximately $O(k_n^3 + J_n k_n^2)$. Although the total number of iterations could be exponential in theory, empirical evidence shows that the algorithm typically converges to high-quality solutions within a manageable number of iterations, even for moderately large problem sizes.
Summing across all tensor modes, the overall computational complexity of Algorithm~\ref{alg:GeoSPCA-Tucker} is $O(\sum_{n=1}^{N} (k_n^3 + J_n k_n^2))$.

Additionally, it is important to highlight that our method avoids the explicit computation of covariance matrices. This design choice not only reduces computational overhead but also significantly lowers memory consumption, particularly in high-dimensional scenarios where $J_n \ll \prod_{i \ne n} J_i$. As a result, problem~\eqref{equ:eta-constrained-problem} becomes especially attractive for large-scale tensor applications.

\section{EMPIRICAL RESULTS}\label{sec: EMPIRICAL RESULTS}

This section presents both synthetic and real-data experiments to evaluate the effectiveness of the proposed \textit{sparseGeoHOPCA} framework. 
We begin with support recovery in controlled simulations, then evaluate classification performance under high-dimensional compression. 
We further validate the method through an image reconstruction task on ImageNet, where \textit{sparseGeoHOPCA} outperforms matrix-based baselines in both visual quality and computational efficiency.

\textit{Computational resources.} All experiments were conducted on a workstation equipped with an Intel(R) Core(TM) i7-10700 CPU @ 2.90GHz, an NVIDIA RTX 4070 Super GPU, and 64GB RAM. The proposed sparseGeoHOPCA algorithm was implemented in Python using PyTorch 2.6 and Gurobi 10.0.1 for solving the binary linear optimization (BLO) subproblems.

% \subsection{Synthetic Experiments}
% We perform controlled synthetic experiments to evaluate the support recovery performance of the proposed sparseGeoHOPCA framework under varying sparsity and dimensionality conditions. 
% The results demonstrate that our method is particularly effective in accurately identifying sparse structures, significantly outperforming baseline HOPCA in both precision and robustness.
% \subsubsection{Simulation Setup and Evaluation}\label{sec: Simulation Setup and Evaluation}

% We assess the performance of our proposed methods through simulations based on a low-rank third-order tensor model with sparse rank-one factors.
% The observed tensor $\mathcal{X}$ is generated according to the following model:
% \begin{equation}\label{equ: X=d u v w}
%     \mathcal{X} = \sum_{k=1}^{K} d_k\, \mathbf{u}_k \circ \mathbf{v}_k \circ \mathbf{w}_k + \mathcal{E},
% \end{equation}
% where $\mathbf{u}_k$, $\mathbf{v}_k$, and $\mathbf{w}_k$ represent the $k$-th component factors along each mode, $d_k$ denotes the corresponding singular value (signal strength), and $\mathcal{E}$ is a noise tensor with elements independently sampled from the standard normal distribution, i.e., $\mathcal{E}_{i,j,l} \overset{\text{iid}}{\sim} N(0,1)$.

\subsection{Synthetic Experiments}\label{sec: Synthetic Experiments}
We evaluate the support recovery performance of \textit{sparseGeoHOPCA} through controlled synthetic simulations based on a low-rank third-order tensor model, under varying sparsity and dimensionality conditions. 
The observed tensor is generated as
\begin{equation}\label{equ: X=d u v w}
    \mathcal{X} = \sum_{k=1}^{K} d_k\, \mathbf{u}_k \circ \mathbf{v}_k \circ \mathbf{w}_k + \mathcal{E}, \quad \mathcal{E}_{i,j,l} \overset{\text{iid}}{\sim} \mathcal{N}(0,1),
\end{equation}
where $K = 1$ and $d_1 = 100$. 
We consider four simulation scenarios with varying tensor dimensions and sparsity structures (details provided in Appendix~\ref{appendix:synthetic-setup}). 
The results demonstrate that our method is particularly effective in accurately identifying sparse structures, significantly outperforming baseline HOPCA in both precision and robustness.

Figure~\ref{fig:roc_comparison} shows ROC curves averaged over 50 replicates in Scenarios 1 and 2, where only mode-$u_1$ is sparse. In both settings, \textit{sparseGeoHOPCA} consistently achieves higher true positive rates and lower false positive rates compared to baseline methods, demonstrating robust support recovery under moderate sparsity and dimensional imbalance.

\begin{figure}[ht]
    \centering
    \includegraphics[width=\textwidth]{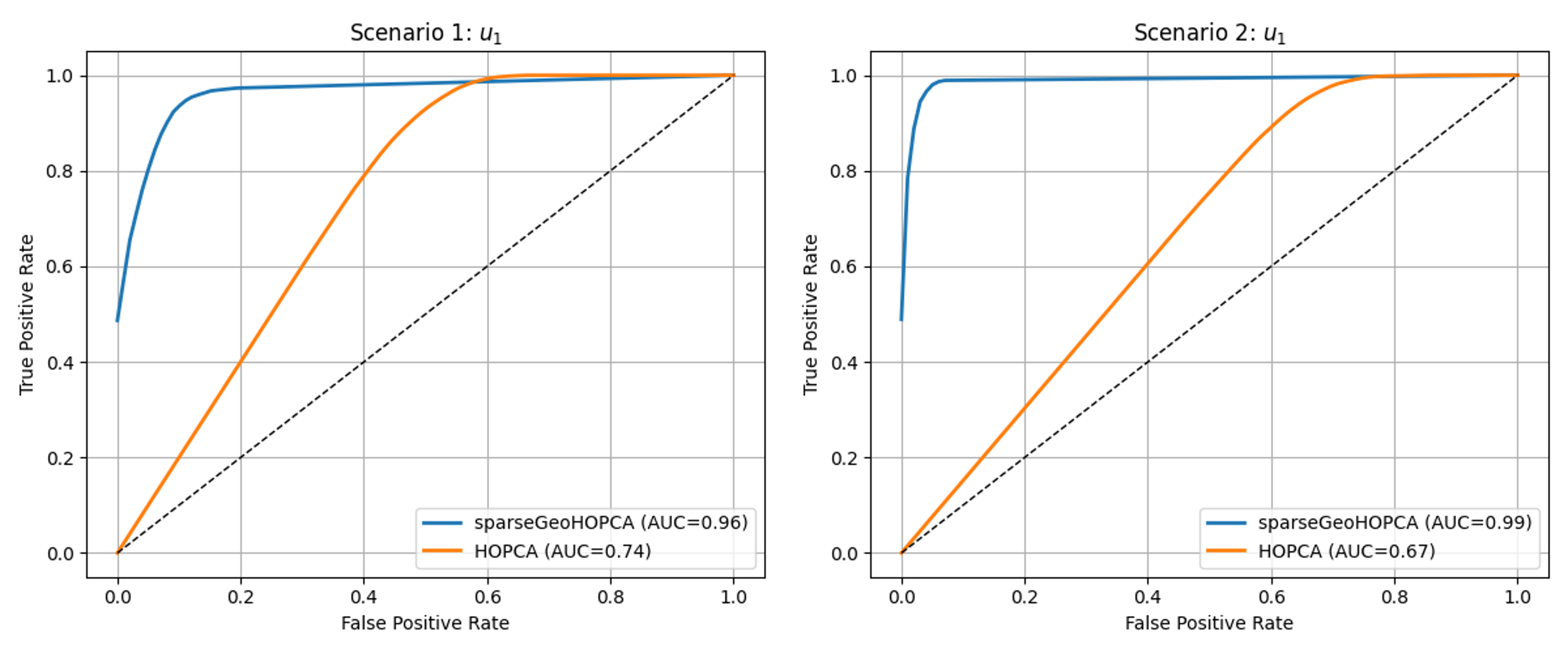}
    \caption{ROC curves for mode-$u_1$ in Scenarios 1 and 2, where it is the only sparse mode. Results are averaged over fifty replicates.}
    \label{fig:roc_comparison}
\end{figure}

Appendix~\ref{sec: ROC Analysis and Feature Selection Accuracy} includes ROC curves for Scenarios 3 and 4, and Appendix~\ref{appendix:tp_fp} reports true/false positive rate comparisons between HOPCA and \textit{sparseGeoHOPCA}.

\subsection{Classification with Compressed Features Extracted}
\label{sec: Classification with Compressed Features Extracted}

To assess the effectiveness of \textit{sparseGeoHOPCA} in preserving discriminative structures under compression, we conduct classification experiments on a reduced MNIST dataset. We apply the method to the sample mode of the training tensor and compress each class-specific subspace by retaining only a small number of representative directions. The test data remain uncompressed and are projected onto these compressed bases.

Table~\ref{tab:compression_accuracy} reports the overall classification accuracy under varying compression ratios. Even with a $10\times$ reduction in training dimensionality, the accuracy remains stable, demonstrating the robustness of the learned sparse representations.

\begin{table}[htbp]
\centering
\caption{Overall classification accuracy (\%) under varying compression ratios}
\label{tab:compression_accuracy}
\begin{tabular}{c|cccccc}
\toprule
Compression Ratio & 1.0 (no compression) & 0.8 & 0.6 & 0.4 & 0.2 & 0.1 \\
\midrule
Accuracy (\%)     & 87.75 & 86.25 & 86.50 & 86.88 & 86.50 & 84.62 \\
\bottomrule
\end{tabular}
\end{table}

Additional experimental details, including the classification framework, visualizations, and confusion matrices before and after compression, are provided in Appendix~\ref{appendix:classification}.

\subsection{Image Reconstruction}\label{sec: Image Reconstruction}
To further assess the applicability of \textit{sparseGeoHOPCA} to high-dimensional data, we conduct an image reconstruction experiment using selected samples from the ImageNet dataset \cite{russakovsky2015imagenet}. 
As most existing sparse tensor PCA methods lack public implementations and are designed for specific tensor structures, we compare our method with two representative matrix-based sparse PCA approaches.

We select two state-of-the-art sparse PCA methods designed for matrix data as baselines: sparsePCAChan~\cite{chan2015worst} and sparsePCABD~\cite{delefficient}. 
RGB images are first converted into matrix format by flattening the spatial dimensions. After extracting a fixed number of sparse components, we reconstruct the original image and compare the visual quality.

Figure~\ref{fig:imagenet_reconstruction} displays the reconstruction results. 
\textit{sparseGeoHOPCA} produces visibly sharper edges, fewer compression artifacts, and better texture preservation across all four image examples, despite being originally designed for tensors. 
It also achieves consistently lower reconstruction error and reduced runtime relative to the matrix-based baselines. 
These findings highlight both the accuracy and efficiency of \textit{sparseGeoHOPCA}, demonstrating its generalization capacity in structured dimensionality reduction tasks.

Full experimental details, including preprocessing steps, component settings, and descriptions of sparsePCAChan and sparsePCABD, as well as additional experimental results, are provided in Appendix~\ref{appendix:Details of Image Reconstruction Experiment}.

\begin{figure}[htbp]
    \centering
    \includegraphics[width=\textwidth]{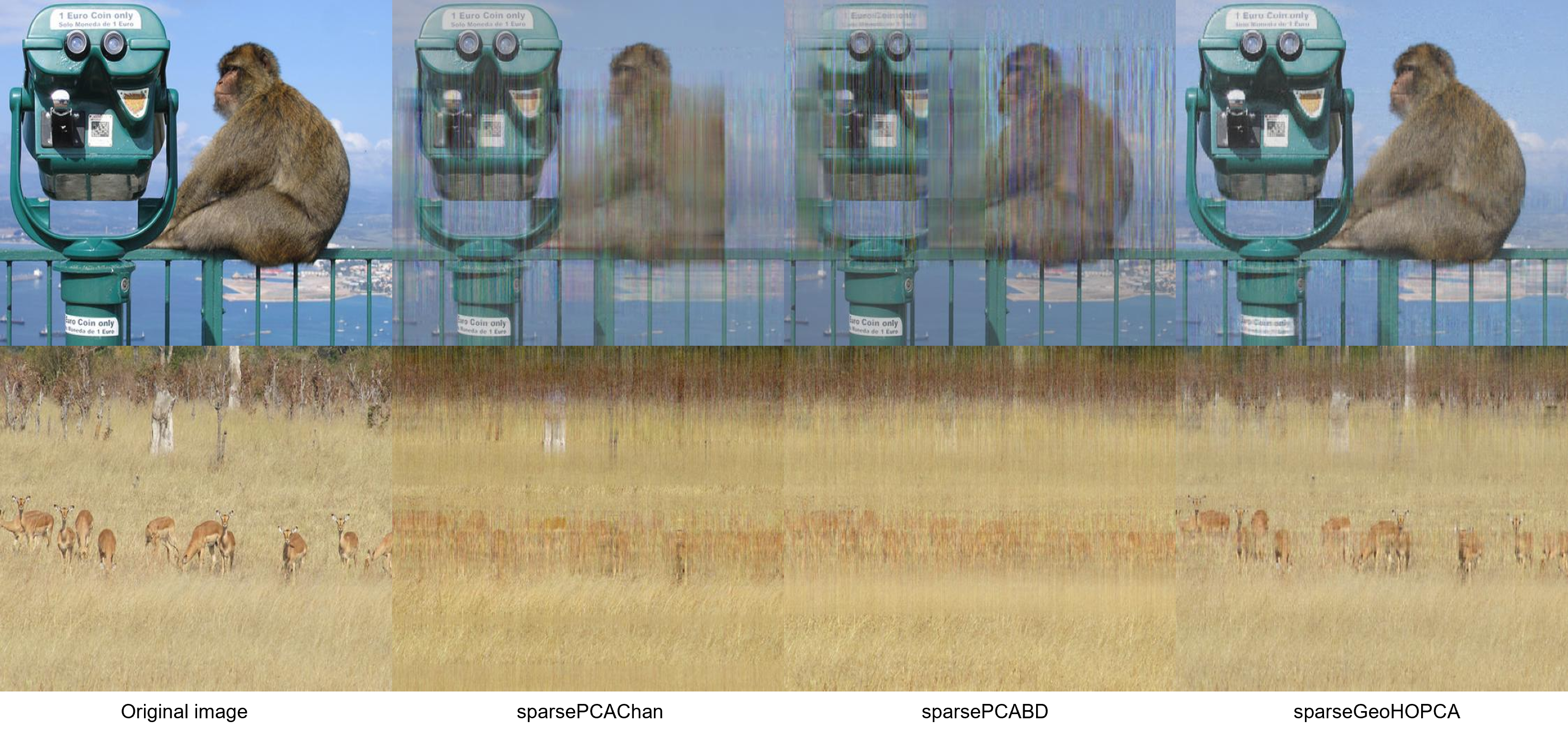}
    \caption{Visual comparison of image reconstruction results on ImageNet samples using three sparse PCA methods. From left to right: original image, sparsePCAChan, sparsePCABD, and our \textit{sparseGeoHOPCA}. In addition to preserving structural detail and reducing visual artifacts, our method also demonstrates lower reconstruction error and faster runtime compared to the matrix-based baselines.}
    \label{fig:imagenet_reconstruction}
\end{figure}

\section{Conclusion}\label{sec: conclusion}

We presented sparseGeoHOPCA, a geometry-aware framework for sparse higher-order principal component analysis (SHOPCA). By reformulating mode-wise sparse optimization as structured binary linear programs, our method eliminates the need for covariance estimation and deflation, enabling scalable and interpretable tensor decomposition.

Theoretical results establish the equivalence of the proposed subproblems to the original SHOPCA formulation and provide worst-case error bounds based on PCA residuals. The algorithm achieves linear computational complexity with respect to tensor size via alternating optimization.

Extensive experiments confirm that sparseGeoHOPCA achieves superior support recovery, robust classification under 10× compression, and effective image reconstruction, outperforming baseline methods while preserving structural discriminability in high-dimensional settings.

{
\newpage
\small
\bibliographystyle{abbrv}
\bibliography{reference, mybib}

}

%%%%%%%%%%%%%%%%%%%%%%%%%%%%%%%%%%%%%%%%%%%%%%%%%%%%%%%%%%%%
\newpage
\appendix
\section{Proof of Theorem~\ref{thm: optimization decomposition}}\label{app: appendix a}
This appendix provides the detailed proof of Theorem~\ref{thm: optimization decomposition}.
Let $\{U_{i}\}_{i=1,\,i\ne n}^{N}$ denote the collection of all mode-$i$ projection matrices excluding mode $n$, i.e., $(U_1, \ldots, U_{n-1}, U_{n+1}, \ldots, U_N)$. We denote the objective function in \eqref{equ: f(U_{1},U_{2},,U_{N})} as $f(U_n)$ when all other $U_i$'s are fixed. Assume that each $U_i$ is column-orthonormal.

By properties of mode-wise tensor projections and the orthogonality of $U_i U_i^\top$, we have:
\begin{equation}
\begin{aligned}
f(U_n) 
&= \left\| \mathcal{X} - \mathcal{X} \times_1 U_1 U_1^\top \cdots \times_N U_N U_N^\top \right\|_F^2 \\
&= \left\| \mathcal{X} \times_N U_N U_N^\top - \mathcal{X} \times_1 U_1 U_1^\top \cdots \times_N U_N U_N^\top \right\|_F^2 \\
&\quad + \left\| \mathcal{X} - \mathcal{X} \times_N U_N U_N^\top \right\|_F^2 \\
&\quad \vdots \\
&= \left\| \mathcal{X} \times_{j \ne n} U_j U_j^\top - \mathcal{X} \times_{j \ne n} U_j U_j^\top \times_n U_n U_n^\top \right\|_F^2 \\
&\quad + \left\| \mathcal{X} - \mathcal{X} \times_{j \ne n} U_j U_j^\top \right\|_F^2,
\end{aligned}
\end{equation}

where $\times_{j \ne n}$ denotes the sequence of mode-$j$ projections over all $j \in \{1,\ldots,N\} \setminus \{n\}$. 
Noting that the second term in the final expression is independent of $U_n$, the optimization reduces to minimizing
$\left\| \mathcal{X} \times_{j \ne n} U_j^\top - \mathcal{X} \times_{j \ne n} U_j^\top \times_n U_n U_n^\top \right\|_F^2$.
This is equivalent to minimizing the matrix-form objective $\left\| X_{(n)} - U_n U_n^\top X_{(n)} \right\|_F^2$,
which corresponds precisely to problem~\eqref{equ: fU1U2UN} in the main text. This completes the proof.

\section{Proof of Theorem~\ref{thm: column optimaztion}}\label{app: appendix b}

This appendix provides the detailed proof of Theorem~\ref{thm: column optimaztion}. The proof is divided into two main steps.

\paragraph{Step 1: Reformulation of the Problem.}
We first show that the optimization problem~\eqref{equ: X_n-U_nU_n^TX_n} can be equivalently reformulated as the following problem:
\begin{equation}\label{equ:temp_equation}
\begin{aligned}
\underset{\mathbf{s} \in \{0,1\}^{\prod_{i \ne n} J_i},\; U_{n} \in \mathbb{R}^{J_{n} \times R_{n}}}{\text{maximize}} \quad & 
\sum_{j=1}^{\prod_{i \ne n} J_i} s_j \sum_{k=1}^{R_{n}} \left( X_{(n)}(:,j)^{\top} U_{n}(:,k) \right)^2 \\
\text{subject to} \quad & 
\mathbf{e}^\top \mathbf{s} \leq k_n, \quad 
U_{n}^{\top} U_{n} = I_{R_{n}}.
\end{aligned}
\end{equation}

\textit{Proof of Step 1.} \\

By the property of the Frobenius norm, problem~\eqref{equ: X_n-U_nU_n^TX_n} can be rewritten as:
\begin{equation}
    \begin{aligned}
         \underset{U_n}{\text{minimize}} \quad \left\| X_{(n)} - U_n U_n^\top X_{(n)} \right\|_F^2  
         &= \text{tr}\left( (X_{(n)} - U_n U_n^\top X_{(n)})^{\top}(X_{(n)} - U_n U_n^\top X_{(n)}) \right) \\
         &= \text{tr}(X_{(n)}^{\top}X_{(n)}) - \text{tr}(X_{(n)}^{\top}U_{n}U_{n}^{\top}X_{(n)}),
    \end{aligned}
\end{equation}
where the second equality follows from the orthogonality condition $U_{n}^{\top}U_{n}=I_{R_{n}}$.

Since the first term $\text{tr}(X_{(n)}^{\top}X_{(n)})$ is constant with respect to $U_n$, minimizing the objective is equivalent to maximizing
\begin{equation}\label{equ: left enigenvalue}
    \underset{U_n}{\text{maximize}} \quad \text{tr}(X_{(n)}^{\top}U_{n}U_{n}^{\top}X_{(n)}) = \underset{U_n}{\text{maximize}} \quad \text{tr}(U_{n}^{\top}X_{(n)}X_{(n)}^{\top}U_{n}).
\end{equation}

The optimization problem~\eqref{equ: left enigenvalue} is originally formulated as a left eigenvalue problem. 
We first transform it into an equivalent right eigenvalue formulation to facilitate the incorporation of sparsity constraints.

Specifically, we introduce an auxiliary binary vector $\mathbf{s} \in \{0,1\}^{\prod_{i \ne n} J_i}$, where $s_i = 0$ if the $i$-th row of $W$ is zero, and $s_i = 1$ otherwise. 
Define $S = \operatorname{diag}(\mathbf{s})$. 
The constraint $\mathbf{e}^\top \mathbf{s} \leq k_n$ ensures that the number of nonzero rows of $W$ does not exceed $k_n$.

Under this construction, the optimization problem becomes
\begin{equation}
    \begin{aligned}
\underset{\mathbf{s} \in \{0,1\}^{\prod_{i \ne n} J_i},\; W \in \mathbb{R}^{\prod_{i \ne n} J_i \times R_{n}}}{\text{maximize}} \quad & 
\text{tr}(W^{\top} S X_{(n)}^{\top} X_{(n)} S W) \\
\text{subject to} \quad & 
\mathbf{e}^\top \mathbf{s} \leq k_n, \quad 
W^{\top}W = I_{R_n}, \quad W^{\top}SW = I_{R_n}.
\end{aligned}
\end{equation}

The additional constraint $W^{\top} S W = I_{R_n}$ can be removed without loss of optimality, as it can be justified through singular value decomposition (SVD) analysis of $S X_{(n)}^{\top} = X_{(n)}(:,s)$.

We then convert the right eigenvalue formulation back to the original left eigenvalue form. 
By expanding the trace, the objective simplifies as:
\begin{equation}
    \begin{aligned}
        \text{tr}(U_{n}^{\top} X_{(n)} X_{(n)}^{\top} U_{n}) 
        &= \sum_{j=1}^{k_n} \sum_{k=1}^{R_n} \left( X_{(n)}(:,s_j)^{\top} U_n(:,k) \right)^2 \\
        &= \sum_{j=1}^{\prod_{i \ne n} J_i} s_j \sum_{k=1}^{R_n} \left( X_{(n)}(:,j)^{\top} U_n(:,k) \right)^2,
    \end{aligned}
\end{equation}
thus completing the proof that problem~\eqref{equ: X_n-U_nU_n^TX_n} is equivalent to problem~\eqref{equ:temp_equation}.

\paragraph{Step 2: Connection to the $\eta$-Constrained Selection Problem.}
Let $\mathbf{s}^0$ be an optimal solution to problem~\eqref{equ:temp_equation}. We now show that there exists a constant $\delta > 0$ such that for any $\eta \in [\eta(\mathbf{s}^0), \eta(\mathbf{s}^0) + \delta]$, any optimal solution to the $\eta$-constrained selection problem~\eqref{equ:eta-constrained-problem} is also an optimal solution to problem~\eqref{equ:temp_equation}.

\textit{Proof of Step 2.} \\

We proceed by contradiction. Suppose $\eta = \eta(\mathbf{s}^0)$, and assume that there exists a selection $\mathbf{s}' \in \{0,1\}^{\prod_{i \ne n} J_i}$ such that $\mathbf{s}'$ is an optimal solution to problem~\eqref{equ:eta-constrained-problem}, but not an optimal solution to problem~\eqref{equ:temp_equation}.

Then it follows that
\begin{equation}\label{equ:A}
\sum_{j=1}^{\prod_{i \ne n} J_i} s^0_j \sum_{k=1}^{R_n} \left( X_{(n)}(:,j)^{\top} U_n(:,k) \right)^2
>
\sum_{j=1}^{\prod_{i \ne n} J_i} s'_j \sum_{k=1}^{R_n} \left( X_{(n)}(:,j)^{\top} U_n(:,k) \right)^2.
\end{equation}

Since $\mathbf{s}'$ is feasible for problem~\eqref{equ:eta-constrained-problem}, we have
\begin{equation}\label{equ:B}
\sum_{j=1}^{\prod_{i \ne n} J_i} s'_j \left\| X_{(n)}(:,j) \right\|_F^2
\geq
\sum_{j=1}^{\prod_{i \ne n} J_i} s^0_j \left\| X_{(n)}(:,j) \right\|_F^2.
\end{equation}

Combining~\eqref{equ:A} and~\eqref{equ:B}, we conclude that
$$
\eta(\mathbf{s}') > \eta(\mathbf{s}^0) = \eta,
$$
which contradicts the feasibility condition $\eta(\mathbf{s}) \leq \eta$ required in problem~\eqref{equ:eta-constrained-problem}.

Since the feasible set $\{0,1\}^{\prod_{i \ne n} J_i}$ is finite and discrete, there exists a constant $\delta > 0$ such that the set of feasible solutions to problem~\eqref{equ:eta-constrained-problem} remains unchanged when $\eta$ varies within $[\eta(\mathbf{s}^0), \eta(\mathbf{s}^0) + \delta]$. 

Specifically, let $\hat{\mathbf{s}}$ be defined as
\begin{equation}
    \hat{\mathbf{s}} \in \arg\min_{\mathbf{s}}
\left\{
\begin{aligned}
& \left\| X_{(n)}(:,\mathbf{s}) - U_n[\mathbf{s}] U_n[\mathbf{s}]^\top X_{(n)}(:,\mathbf{s}) \right\|_F^2 \\
& \quad \text{subject to} \quad
\left\| X_{(n)}(:,\mathbf{s}) - U_n[\mathbf{s}] U_n[\mathbf{s}]^\top X_{(n)}(:,\mathbf{s}) \right\|_F^2 > \eta(\mathbf{s}^0)
\end{aligned}
\right\}.
\end{equation}

and define $\hat{\eta} = \left\| X_{(n)}(:,\hat{\mathbf{s}}) - U_n[\hat{\mathbf{s}}] U_n[\hat{\mathbf{s}}]^\top X_{(n)}(:,\hat{\mathbf{s}}) \right\|_F^2$. Then, we can set $\delta = \frac{\hat{\eta} - \eta(\mathbf{s}^0)}{2}$ to guarantee the stability of the optimal solution. This completes the proof.

\bigskip

Combining Step 1 and Step 2, this completes the proof of Theorem~\ref{thm: column optimaztion}.

\section{Proof of Theorem~\ref{thm: matrix error}}\label{app: appendix c}

This appendix provides the proof of Theorem~\ref{thm: matrix error}. The second inequality in~\eqref{equ: epsilon n bound} is straightforward. We now focus on proving the first inequality.

Consider
\begin{equation}
    \begin{aligned}
        \|S^{0}\epsilon^{n}\|_{F} 
        &= \|S^{0}(X_{(n)} - VV^{\top}X_{(n)})\|_{F} \\
        &= \|S^{0}X_{(n)} - S^{0}VV^{\top}X_{(n)}\|_{F} \\
        &= \|S^{0}X_{(n)} - VV^{\top}S^{0}X_{(n)}\|_{F} \\
        &\geq \|X_{(n)}(:,\mathbf{s}^{0}) - U_n[\mathbf{s}^{0}] U_n[\mathbf{s}^{0}]^{\top} X_{(n)}(:,\mathbf{s}^{0})\|_{F} \\
        &= \sqrt{\eta(\mathbf{s}^{0})},
    \end{aligned}
\end{equation}
where the third equality uses $S^{0}VV^{\top} = VV^{\top}S^{0}$, and the inequality follows from restricting the operation to the selected columns corresponding to $\mathbf{s}^0$.

This completes the proof of the boundary estimation~\eqref{equ: epsilon n bound}.

Furthermore, when $\eta = \|S^{0}\epsilon^{n}\|_{F}$, since $\|X_{(n)}(:,\mathbf{s}^{0}) - U_n[\mathbf{s}^{0}] U_n[\mathbf{s}^{0}]^{\top} X_{(n)}(:,\mathbf{s}^{0})\|_{F} \leq \|S^{0}\epsilon^{n}\|_{F}$, it follows that $\eta(\mathbf{s}^{0}) \leq \eta$, ensuring that $\mathbf{s}^{0}$ is feasible for problem~\eqref{equ:eta-constrained-problem}.

\section{Proof of Theorem~\ref{thm: tensor error bound}}\label{app: appendix d}

This appendix provides the proof of Theorem~\ref{thm: tensor error bound}.
We have
\begin{equation}
    \begin{aligned}
        f(U_1, U_2, \ldots, U_N) 
        &= \left\| \mathcal{X} - \mathcal{X} \times_1 U_1 U_1^\top \times_2 \cdots \times_N U_N U_N^\top \right\|_F^2 \\
        &\leq \sum_{n=1}^{N} \left\| \mathcal{X} - \mathcal{X} \times_n U_n U_n^\top \right\|_F^2 \\
        &= \sum_{n=1}^{N} \left\| X_{(n)} - U_n U_n^\top X_{(n)} \right\|_F^2 \\
        &\leq \sum_{n=1}^{N} \sum_{i \in \sigma^{n}} \left\| \epsilon_i^{n} \right\|^2,
    \end{aligned}
\end{equation}
where the first inequality follows from the results in~\cite{che2025efficientsiam,kolda2009tensor}, and the second inequality follows from Theorem~\ref{thm: matrix error}.

\section{Additional Details on Synthetic Experiments}\label{app: Additional Details on Synthetic Experiments}

\subsection{Simulation Setup}
\label{appendix:synthetic-setup}

We consider four simulation scenarios that vary in dimensionality and sparsity structure:
\begin{itemize}
    \item Scenario 1: $100 \times 100 \times 100$, sparsity in mode $\mathbf{U}$ only;
    \item Scenario 2: $1000 \times 20 \times 20$, sparsity in mode $\mathbf{U}$ only;
    \item Scenario 3: $100 \times 100 \times 100$, sparsity in all three modes;
    \item Scenario 4: $1000 \times 20 \times 20$, sparsity in all three modes.
\end{itemize}
For sparse modes, we randomly set 50\% of entries to zero, and the remaining entries are drawn from $N(0,1)$. 
For dense modes, the factors are obtained as the first $K$ left and right singular vectors of matrices with i.i.d. $N(0,1)$ entries.
\subsection{ROC Analysis and Feature Selection Accuracy}\label{sec: ROC Analysis and Feature Selection Accuracy}

To quantify the accuracy of support recovery, we report averaged Receiver Operating Characteristic (ROC) curves over 50 replications in each simulation setting.
Figure~\ref{fig:roc_comparison} displays ROC curves for mode-$u_1$ in Scenarios 1 and 2, where it is the only sparse mode. Figure~\ref{fig:roc_comparison2} presents ROC curves for modes $u_1$, $v_1$, and $w_1$ in Scenarios 3 and 4, where sparsity is present in all modes. In both cases, we compare the proposed \textit{sparseGeoHOPCA} method with a baseline HOPCA approach that applies naive thresholding to the components of Tucker decomposition~\cite{kolda2009tensor,kossaifi2019tensorly}.

As illustrated in Figures~\ref{fig:roc_comparison} and~\ref{fig:roc_comparison2}, \textit{sparseGeoHOPCA} consistently achieves higher true positive rates while maintaining substantially lower false positive rates across all simulation settings. This improvement is particularly evident in Scenarios 3 and 4, where the data exhibit full-mode sparsity and severe dimensional imbalance.
Moreover, the area under the ROC curve (AUC) highlights the robustness and reliability of sparseGeoHOPCA in high-dimensional, sparse tensor settings.

\begin{figure}[ht]
    \centering
    \includegraphics[width=\textwidth]{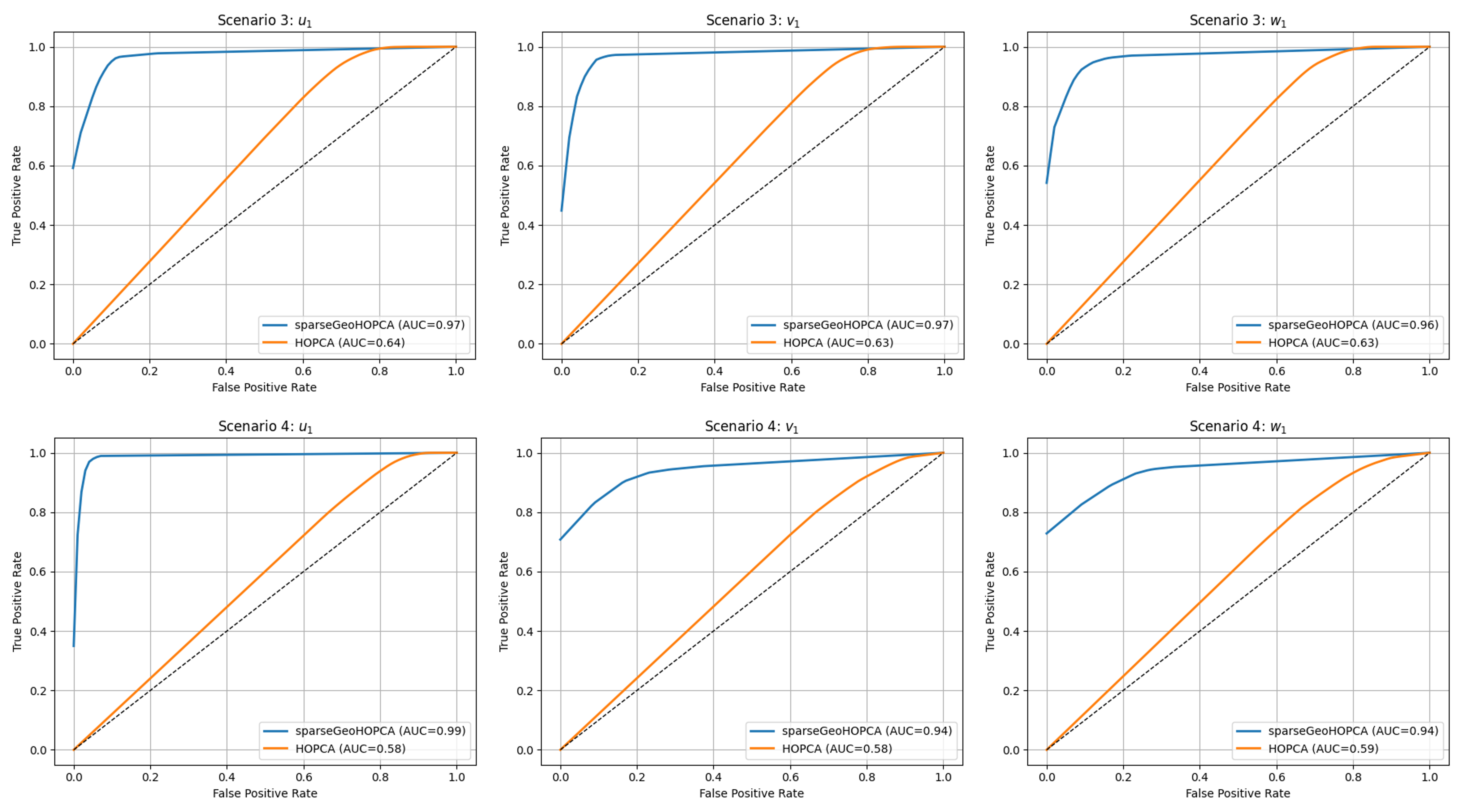}
    \caption{ROC curves for modes $u_1$, $v_1$, and $w_1$ in Scenarios 3 and 4, where sparsity is present in all modes. Results are averaged over fifty independent runs.}
    \label{fig:roc_comparison2}
\end{figure}

\subsection{True and False Positive Rate Comparison}
\label{appendix:tp_fp}
Table~\ref{tab:tp_fp_refined} summarizes the mean and standard deviation of true positive (TP) and false positive (FP) rates for sparseGeoHOPCA and HOPCA across all modes and scenarios. 
The results reinforce the ROC analysis by demonstrating that sparseGeoHOPCA not only yields high TP rates but also significantly reduces FP rates compared to HOPCA.

\begin{table}[htbp]
\centering
\caption{TP/FP comparison between HOPCA and sparseGeoHOPCA across scenarios}
\begin{tabular}{|c|c|c|c|c|}
\hline
\textbf{Scenario} & \textbf{Mode} & \textbf{Method} & \textbf{TP (mean ± std)} & \textbf{FP (mean ± std)} \\
\hline
\multirow{2}{*}{1} 
& $u_1$ & sparseGeoHOPCA & 0.967 ± 0.048 & 0.041 ± 0.052 \\
&       & HOPCA          & 1.000 ± 0.000 & 0.514 ± 0.082 \\
\hline
\multirow{2}{*}{2} 
& $u_1$ & sparseGeoHOPCA & 0.988 ± 0.015 & 0.012 ± 0.017 \\
&       & HOPCA          & 1.000 ± 0.000 & 0.670 ± 0.077 \\
\hline
\multirow{6}{*}{3} 
& $u_1$ & sparseGeoHOPCA & 0.972 ± 0.040 & 0.034 ± 0.051 \\
&       & HOPCA          & 1.000 ± 0.000 & 0.730 ± 0.076 \\
\cline{2-5}
& $v_1$ & sparseGeoHOPCA & 0.968 ± 0.048 & 0.031 ± 0.038 \\
&       & HOPCA          & 1.000 ± 0.000 & 0.745 ± 0.071 \\
\cline{2-5}
& $w_1$ & sparseGeoHOPCA & 0.962 ± 0.054 & 0.034 ± 0.053 \\
&       & HOPCA          & 1.000 ± 0.000 & 0.733 ± 0.076 \\
\hline
\multirow{6}{*}{4} 
& $u_1$ & sparseGeoHOPCA & 0.989 ± 0.016 & 0.014 ± 0.017 \\
&       & HOPCA          & 1.000 ± 0.000 & 0.839 ± 0.072 \\
\cline{2-5}
& $v_1$ & sparseGeoHOPCA & 0.928 ± 0.098 & 0.044 ± 0.091 \\
&       & HOPCA          & 1.000 ± 0.000 & 0.845 ± 0.112 \\
\cline{2-5}
& $w_1$ & sparseGeoHOPCA & 0.929 ± 0.096 & 0.045 ± 0.095 \\
&       & HOPCA          & 1.000 ± 0.000 & 0.826 ± 0.120 \\
\hline
\end{tabular}
\label{tab:tp_fp_refined}
\end{table}

While HOPCA achieves perfect TP rates in all cases, it suffers from excessive false positives, often exceeding 70\% in more challenging configurations, indicating poor feature specificity. 
In contrast, sparseGeoHOPCA delivers a more balanced and controlled feature selection. 
Notably, in Scenario 4 mode $v_1$, sparseGeoHOPCA achieves a TP rate of $0.928 \pm 0.098$ and an FP rate of $0.044 \pm 0.091$, whereas HOPCA yields an FP rate as high as $0.845 \pm 0.112$.

These findings demonstrate that sparseGeoHOPCA provides a more effective solution for sparse tensor decomposition when accurate support recovery is critical, especially under limited sample sizes and high ambient dimensionality.

\section{Classification with Compressed Features Extracted}\label{appendix:classification}

% \begin{table}[htbp]
% \centering
% \caption{Overall classification accuracy under varying compression ratios}
% \label{tab:compression_accuracy}
% \begin{tabular}{c|c}
% \toprule
% Compression Ratio & Overall Accuracy (\%) \\
% \midrule
% 1.0 (no compression) & 87.75 \\
% 0.8 & 86.25 \\
% 0.6 & 86.50 \\
% 0.4 & 86.88 \\
% 0.2 & 86.50 \\
% 0.1 & 84.62 \\
% \bottomrule
% \end{tabular}
% \end{table}
The proposed sparseGeoHOPCA algorithm is designed to extract informative and sparse representations from high-dimensional tensor data while preserving structural integrity across all modes. It enables simultaneous multimodal coclustering and feature selection. To validate the effectiveness of the method in real-world classification tasks, we apply it to handwritten digit recognition using the MNIST dataset \cite{lecun1998mnist}. MNIST is a benchmark dataset comprising 60,000 training and 10,000 testing images, where each image is a $28 \times 28$ grayscale representation of a digit (0--9).

The fundamental assumptions underlying image classification are: (1) samples from the same class share common latent features, and (2) samples from different classes exhibit distinct structural patterns. Therefore, a successful classification algorithm should be able to extract discriminative features from raw data. In this study, we focus on supervised classification, where class labels are known during training.

We adopt a projection-based classification framework, following the methodology proposed in \cite{keegan2021tensor,newman2017image}. In the training phase, a local feature subspace (or “local basis”) is constructed for each class using the respective training samples. This subspace captures intra-class variation. During testing, each test image $b$ is orthogonally projected onto all class-specific subspaces, and the classification decision is made by selecting the subspace that yields the smallest Frobenius norm between the projection and the original sample.
\begin{figure}[htbp]
    \centering
    \begin{subfigure}[t]{0.48\textwidth}
        \centering
        \includegraphics[width=\textwidth]{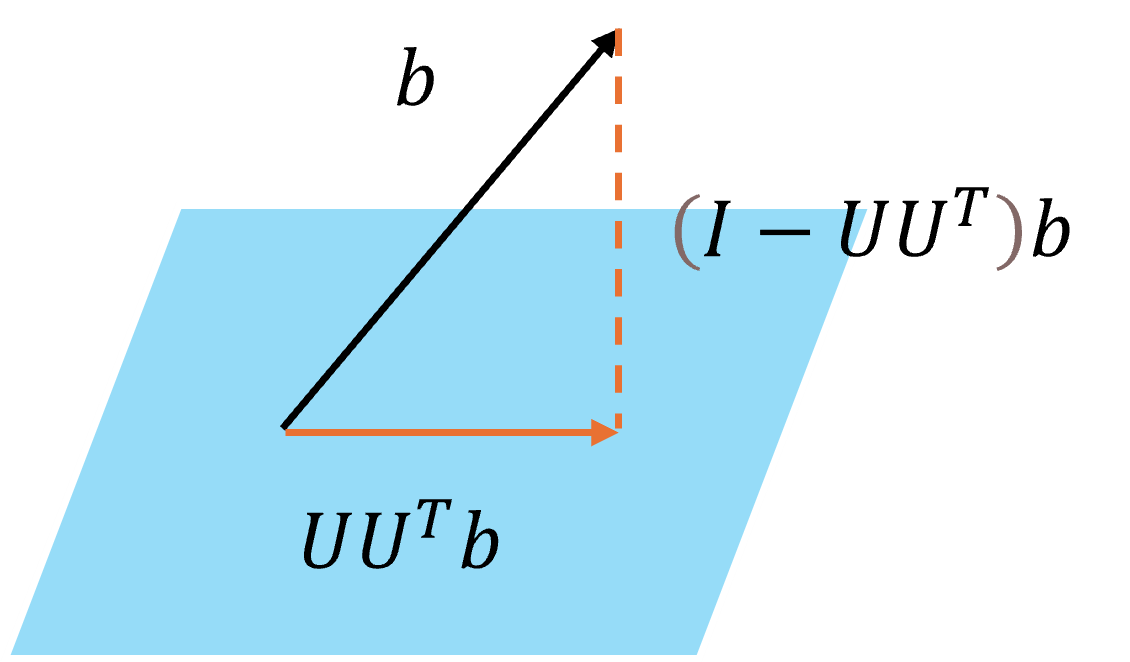}
        \caption{Projection of a test image $b$ onto a single subspace $UU^T$. The residual is $(I - UU^T)b$.}
    \end{subfigure}
    \hfill
    \begin{subfigure}[t]{0.48\textwidth}
        \centering
        \includegraphics[width=\textwidth]{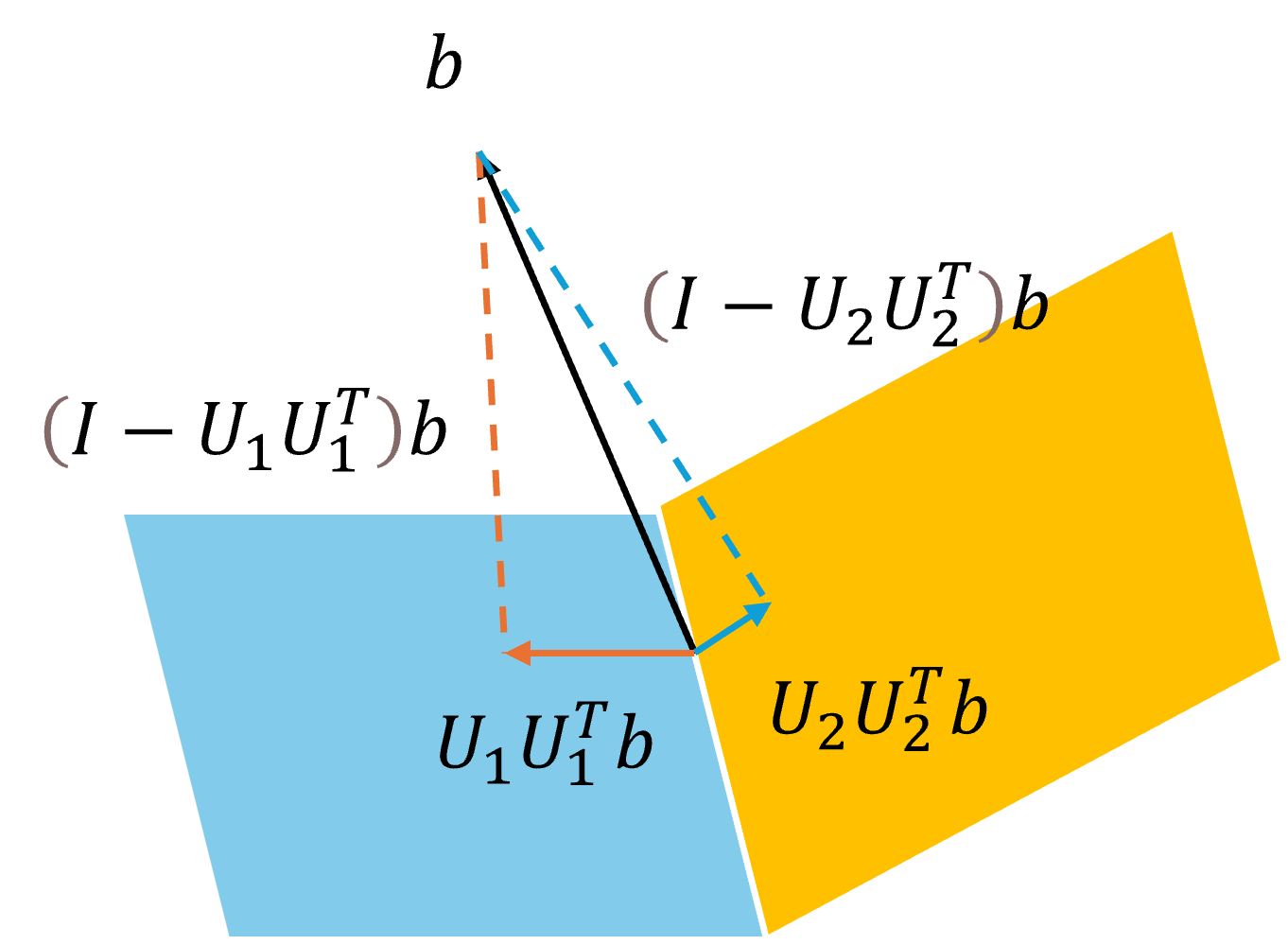}
        \caption{Projection of $b$ onto multiple class-specific subspaces $U_1U_1^T$, $U_2U_2^T$, etc.}
    \end{subfigure}
    \caption{Illustration of the projection-based classification scheme. Classification is based on the subspace yielding the smallest projection residual.}
    \label{fig:proj12}
\end{figure}
Figure~\ref{fig:proj12} illustrates this framework. In the left panel, a test image $b$ is projected onto a single class-specific subspace $UU^T$, with residual $(I - UU^T)b$. In practice, the image is projected onto multiple subspaces $\{U_i U_i^T\}$, and the label is predicted based on the minimum projection error, as shown in the right panel.

The success of this approach critically depends on constructing representative subspaces for each class. To this end, we apply sparseGeoHOPCA independently to the training samples of each class. This process yields sparse, interpretable basis components that enhance both intra-class consistency and inter-class separability.

To visually demonstrate the effectiveness of the extracted features, we use digits 7 and 8 from MNIST as an illustrative case. Figure~\ref{fig:example78} presents representative training images for both classes. Although intra-class variability exists, structural differences between classes are visually apparent.

\begin{figure}[htbp]
    \centering
    \begin{subfigure}[t]{0.48\textwidth}
        \centering
        \includegraphics[width=\textwidth]{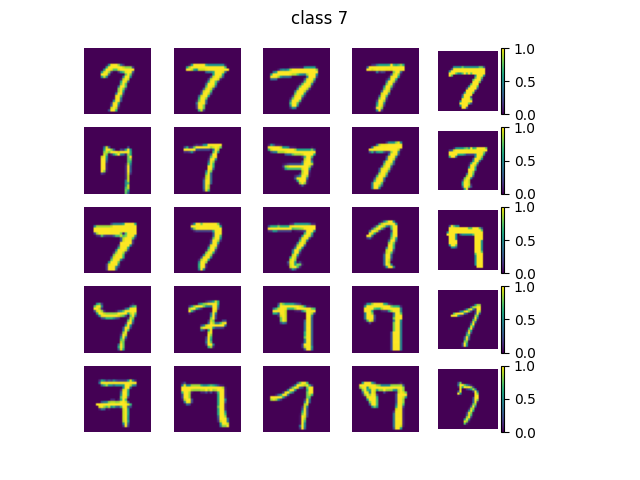}
        \caption{Sample training images from digit class 7.}
    \end{subfigure}
    \hfill
    \begin{subfigure}[t]{0.48\textwidth}
        \centering
        \includegraphics[width=\textwidth]{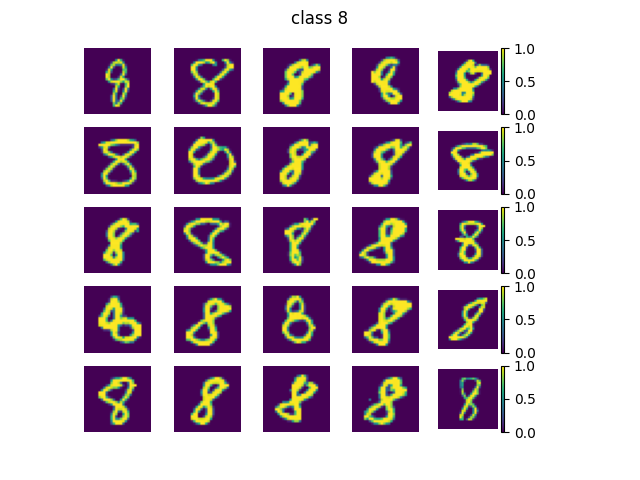}
        \caption{Sample training images from digit class 8.}
    \end{subfigure}
    \caption{Training samples used to construct class-specific subspaces.}
    \label{fig:example78}
\end{figure}

We then apply sparseGeoHOPCA to the training tensors of digits 7 and 8 and visualize the first two basis vectors extracted along the sample mode (i.e., the mode corresponding to training indices). As shown in Figure~\ref{fig:Uvis}, the learned components capture digit-specific structures: class 7 shows strong vertical and angular strokes, while class 8 reveals looped patterns. These localized and interpretable features improve downstream classification performance.

\begin{figure}[htbp]
    \centering
    \begin{subfigure}[t]{0.48\textwidth}
        \centering
        \includegraphics[width=\textwidth]{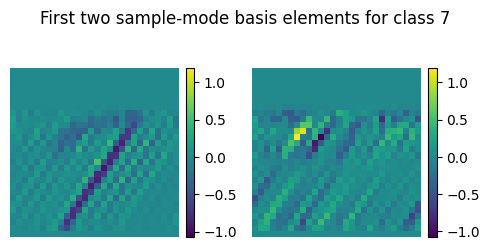}
        \caption{First two basis elements from the sample mode for class 7.}
    \end{subfigure}
    \hfill
    \begin{subfigure}[t]{0.48\textwidth}
        \centering
        \includegraphics[width=\textwidth]{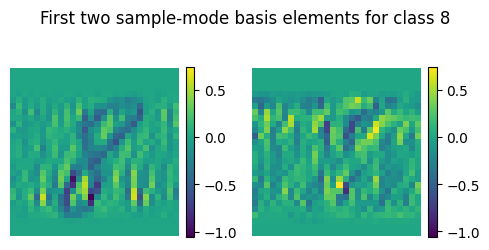}
        \caption{First two basis elements from the sample mode for class 8.}
    \end{subfigure}
    \caption{Visualization of class-specific basis vectors extracted by sparseGeoHOPCA.}
    \label{fig:Uvis}
\end{figure}

\paragraph{Evaluation of Classification Performance Before and After Compression}

To evaluate the effectiveness of sparseGeoHOPCA in preserving discriminative structures under compression, we conduct experiments on a reduced version of MNIST. Specifically, we use 500 training and 80 test samples per class, resulting in a total of 5,000 training and 800 test samples.

To simulate data compression, we apply sparseGeoHOPCA to the sample mode of the training tensor and reduce its dimensionality by a factor of ten. That is, for each class, only 50 representative directions are retained after decomposition. The test set remains uncompressed and is projected onto the compressed class-specific bases.

Figure~\ref{fig:confmat_compare} displays the normalized confusion matrices obtained before and after compression. The baseline case (left panel) achieves an overall accuracy of 87.75\%, while the compressed case (right panel) achieves 84.62\%. Despite the tenfold reduction in training dimension, the classification structure remains largely intact. In particular, digits such as 0, 1, and 4 retain high precision, while digits like 3, 5, and 8 are more sensitive to the compression due to higher intra-class variability.

To further analyze classification robustness under different compression levels, we evaluate the overall accuracy for varying compression ratios. As shown in Table~\ref{tab:compression_accuracy}, the model maintains relatively stable accuracy even when only 10\% of the training data (in the sample mode) is retained.

\begin{figure}[htbp]
    \centering
    \begin{subfigure}[t]{0.48\textwidth}
        \centering
        \includegraphics[width=\textwidth]{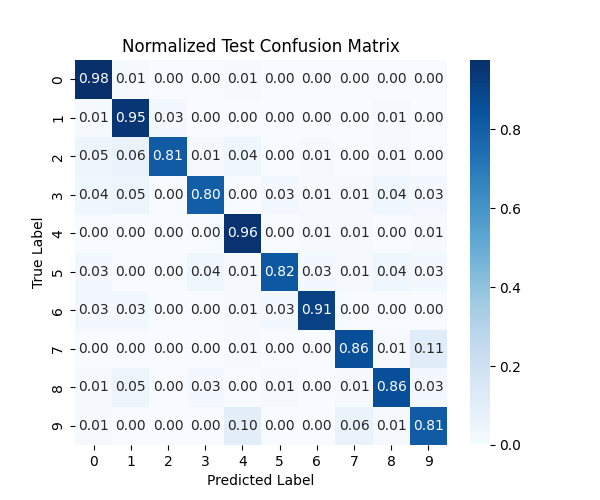}
        \caption{Normalized confusion matrix using original data (Accuracy: 87.75\%).}
    \end{subfigure}
    \hfill
    \begin{subfigure}[t]{0.48\textwidth}
        \centering
        \includegraphics[width=\textwidth]{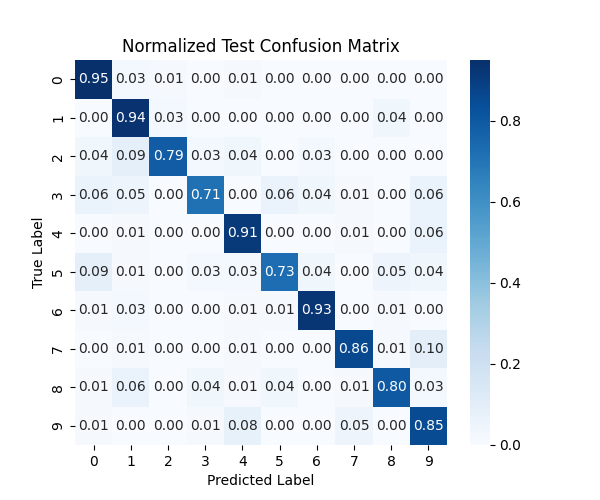}
        \caption{Normalized confusion matrix after 10$\times$ compression (Accuracy: 84.62\%).}
    \end{subfigure}
    \caption{Comparison of classification performance before and after sparseGeoHOPCA-based sample-mode compression. Each class uses 500 training and 80 test samples.}
    \label{fig:confmat_compare}
\end{figure}

\section{Details of Image Reconstruction Experiment}
\label{appendix:Details of Image Reconstruction Experiment}

\paragraph{Dataset and Preprocessing.}
We randomly select four RGB images from the ImageNet dataset~\cite{russakovsky2015imagenet}, with original resolutions of $500 \times 368 \times 3$, $500 \times 375 \times 3$, $500 \times 375 \times 3$, and $500 \times 359 \times 3$, respectively. To facilitate matrix-based analysis, each image $\mathbf{I} \in \mathbb{R}^{m \times n \times 3}$ is reshaped in two modes:
(i) as a row-wise matrix $\mathbf{I}_r \in \mathbb{R}^{m \times 3n}$ by flattening the RGB channels along columns;
(ii) as a column-wise matrix $\mathbf{I}_c \in \mathbb{R}^{n \times 3m}$ by flattening the RGB channels along rows.
These representations allow structured feature extraction along spatial or chromatic dimensions.

\paragraph{Baselines.}
We compare our method against two representative state-of-the-art baselines:
\textbf{(1) Chan’s algorithm (sparsePCAChan):} a polynomial-time approximation algorithm with provable multiplicative guarantees for sparse PCA~\cite{chan2015worst}. 
\textbf{(2) Block-diagonalization-based method (sparsePCABD):} a heuristic approach that transforms the input matrix into a block-diagonal form to facilitate sparse component extraction~\cite{delefficient}.

All methods retain an equal number of principal components (90), and reconstruction is performed via linear combination of the selected bases. 
Visual results are shown in Figure~\ref{fig:imagenet_reconstruction} in the main paper and Figure~\ref{fig:imagenet_reconstruction2}.

\paragraph{Observations.}
Compared to the matrix-specific methods, \textit{sparseGeoHOPCA} yields sharper reconstructions with fewer vertical or horizontal artifacts. 
This indicates that the geometry-aware support selection mechanism, originally designed for tensors, transfers well to structured matrices, especially when preserving global structure is critical.

\begin{figure}[htbp]
    \centering
    \includegraphics[width=\textwidth]{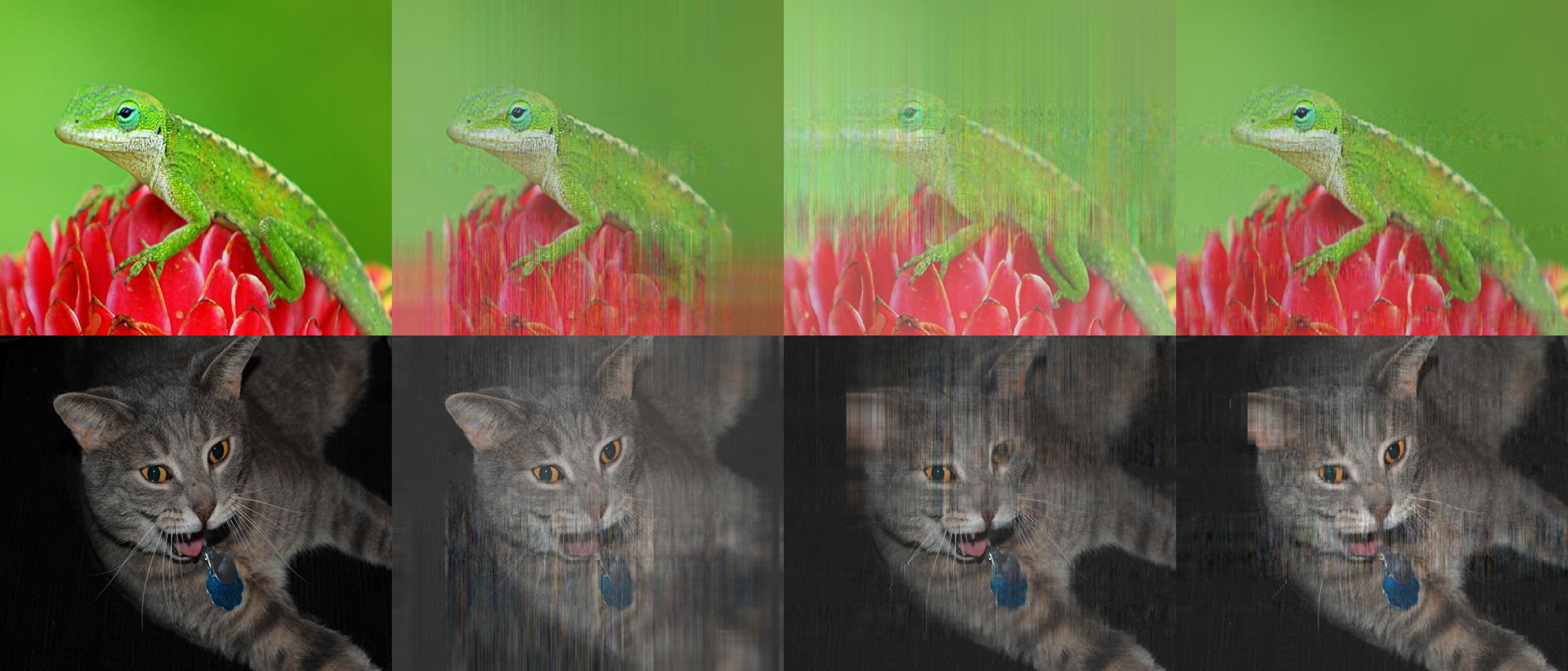}
    \caption{Visual comparison of image reconstruction results on ImageNet samples using three sparse PCA methods. From left to right: original image, sparsePCAChan, sparsePCABD, and our \textit{sparseGeoHOPCA}. In addition to preserving structural detail and reducing visual artifacts, our method also demonstrates lower reconstruction error and faster runtime compared to the matrix-based baselines.}
    \label{fig:imagenet_reconstruction2}
\end{figure}

Quantitative metrics (e.g., PSNR or MSE) may be added in future work to further support visual findings. Compared to matrix-specific methods, \textit{sparseGeoHOPCA} achieves sharper visual reconstructions with significantly fewer directional artifacts (e.g., vertical or horizontal banding).
Beyond qualitative improvements, our method also yields lower reconstruction error (measured by Frobenius norm) and faster runtime, demonstrating both numerical accuracy and computational efficiency. As summarized in Table~\ref{tab:recon_error_time}, \textit{sparseGeoHOPCA} consistently achieves the lowest reconstruction error across all four test images, with Frobenius norms significantly lower than those of matrix-based baselines. Furthermore, it matches or outperforms the baselines in runtime, achieving the fastest execution in three out of four cases. These results highlight the effectiveness of geometry-aware support selection, even when applied to flattened matrix data.

\begin{table}[htbp]
\centering
\small
\caption{Reconstruction error (Frobenius norm) and runtime (in seconds) for each method across four RGB images.}
\label{tab:recon_error_time}
\begin{tabular}{c|cc|cc|cc|cc}
\toprule
\multirow{2}{*}{Method} & \multicolumn{2}{c|}{Image 1 in Figure~\ref{fig:imagenet_reconstruction}} & \multicolumn{2}{c|}{Image 2 in Figure~\ref{fig:imagenet_reconstruction}} & \multicolumn{2}{c|}{Image 3 in Figure~\ref{fig:imagenet_reconstruction2}} & \multicolumn{2}{c}{Image 4 in Figure~\ref{fig:imagenet_reconstruction2}} \\
& Error & Time & Error & Time & Error & Time & Error & Time \\
\midrule
sparsePCAChan    & 70.3 & \textbf{0.8} & 44.4 & 0.9 & 67.6 & \textbf{0.8} & 39.5 & 0.8 \\
sparsePCABD      & 89.5 & 2.3 & 44.2 & 2.5 & 53.6 & 2.4 & 32.4 & 2.1 \\
sparseGeoHOPCA   & \textbf{32.3} & \textbf{0.8} & \textbf{35.5} & \textbf{0.8} & \textbf{33.5} & \textbf{0.8} & \textbf{30.9} & \textbf{0.6} \\

\bottomrule
\end{tabular}
\end{table}

These results suggest that the geometry-aware support selection mechanism, originally designed for tensors—transfers effectively to structured matrices, particularly when global structural fidelity is essential. Quantitative image quality metrics such as PSNR or SSIM may be incorporated in future work to complement these findings.

\section{Limitations}\label{app: Limitations}
While the proposed framework demonstrates strong empirical performance across diverse settings, we note that the final decomposition may exhibit mild sensitivity to the initial support set selection, particularly in extremely noisy scenarios. However, such sensitivity does not materially impact the overall support recovery or classification accuracy, as confirmed by extensive replicates and robustness checks.

\section{Broader Impacts}\label{sec: Boroader Impacts}
This paper contributes a general-purpose method for sparse tensor decomposition, with potential benefits in high-dimensional data analysis across scientific, engineering, and biomedical domains. By improving the interpretability and computational efficiency of higher-order PCA, the proposed framework may facilitate downstream applications such as multimodal learning, neuroimaging, or large-scale sensor data processing. These positive impacts are indirect and contingent upon domain-specific adoption. 

We do not anticipate any direct negative societal impact from this work. The method does not involve sensitive data, human subjects, or decision-making in high-stakes settings. Moreover, the algorithm itself is neutral and intended for general-purpose use in scientific computing. Nevertheless, as with any representation learning technique, care should be taken when applying the method in sensitive contexts (e.g., medical diagnostics or surveillance), where interpretability and fairness are critical.

As a theoretical and algorithmic contribution, the work primarily advances mathematical tools for structured dimensionality reduction, rather than deploying an application-specific pipeline.

\end{document}